\def\bn{\mathbb{N}}
\def\bc{\mathbb{C}}
\def\h{\mathcal{H}}
\def\k{\mathcal{K}}
\newcommand{\ort}[1]{#1^{\perp}}
\newcommand{\inner}[2]{\langle #1,#2\rangle}
\newcommand{\sumjn}{\sum_{j=1}^n}
\newcommand{\sumin}{\sum_{i=1}^n}
\newcommand{\bh}{{\rm B}(\mathcal{H})}
\newcommand{\cb}[1]{{\rm CB}(#1)}
\newcommand{\matn}[1]{{\rm M}_{n}(#1)}
\newcommand{\normc}[1]{\overline{\overline{#1}}}
\newcommand{\weakc}[1]{\overline{#1}}
\newcommand{\e}[1]{{\rm E}(#1)}
\newcommand{\ue}[1]{\overline{\overline{{\rm E}(#1)}}}
\newcommand{\icb}[1]{\rm ICB{(#1)}}
\newcommand{\ib}[1]{{\rm IB}(#1)}
\newcommand{\hg}{\stackrel{h}{\otimes}}
\newcommand{\id}[1]{\mathcal{I}_{\rm c}({#1})}
\newtheorem{theorem}{Theorem}[section]
\newtheorem{lemma}[theorem]{Lemma}
\theoremstyle{remark}
\newtheorem{remark}[theorem]{Remark}
\theoremstyle{definition}
\numberwithin{equation}{section}
\begin{document}
\title[]{Uniform approximation by elementary operators} 
\author{Bojan Magajna} 
\address{Department of Mathematics\\ University of Ljubljana\\
Jadranska 21\\ Ljubljana 1000\\ Slovenia}
\email{Bojan.Magajna@fmf.uni-lj.si}
\thanks{{\em Acknowledgment.} The author is grateful to Ilja Gogi\' c for his comments on
an earlier version of the paper and to the anonymous referee for his suggestions.}
\keywords{C$^*$-algebra,   subhomogeneous, ideals, elementary operators}

\subjclass[2000]{Primary 46L05, 46L07; Secondary 47B47}

\begin{abstract}On a separable C$^*$-algebra $A$ every (completely) bounded map which preserves 
closed two sided ideals can be approximated uniformly by elementary operators
if and only if $A$ is a finite direct sum of C$^*$-algebras of continuous sections vanishing
at $\infty$ of locally trivial C$^*$-bundles of finite type.
\end{abstract}

\maketitle

\section{Introduction and the main result}

Throughout this paper $A$  will denote a C$^*$-algebra, $A_+$ the  positive and
$A_h$ the self-adjoint part of $A$. An {\em elementary
operator on $A$} is a map of the form
\begin{equation}\label{01}\psi(x)=\sum_{i=1}^m a_ixb_i\ \ (x\in A),\end{equation}
where $a_i$ and $b_i$ are fixed elements of the multiplier algebra $M(A)$ of $A$. The
smallest $m$ for which $\psi$ can be expressed in the form (\ref{01}) is called the
{\em length} of $\psi$.
The set of all elementary operators on $A$
is denoted by $\e{A}$ and its norm closure (in the set of all bounded operators on $A$)
by $\ue{A}$. By $\id{A}$ we will 
denote the set of all closed two-sided ideals in $A$ and by $\ib{A}$ (resp. $\icb{A}$)
the set of all bounded (resp. all completely bounded \cite{P})
maps that preserve all ideals in $\id{A}$. By an {\em ideal} we shall always mean a closed
two-sided ideal. Clearly $\e{A}\subseteq\icb{A}\subseteq\ib{A}$.
In this note we characterize C$^*$-algebras for which the equalities $\icb{A}=\ue{A}$
or $\ib{A}=\ue{A}$ hold. 

\begin{theorem}\label{th11} For a separable C$^*$-algebra $A$  the inclusion 
$\icb{A}\subseteq\ue{A}$ 
holds if and only if $A$ is a finite direct sum of
homogeneous C$^*$-algebras of finite type;  in this case $\ib{A}=\e{A}=\icb{A}$.
\end{theorem}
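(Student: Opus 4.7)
The statement combines a structural dichotomy with a uniform approximation equality, and I would prove the two implications separately.

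\emph{Sufficiency.} Assume $A=\bigoplus_{k=1}^{N}A_{k}$ with each $A_{k}$ the algebra of continuous sections vanishing at infinity of a locally trivial $M_{n_{k}}$-bundle of finite type over a locally compact Hausdorff space $X_{k}$. Since each $A_{k}$ is itself an ideal of $A$, any $\phi\in\ib{A}$ splits as $\bigoplus_{k}\phi_{k}$ with $\phi_{k}\in\ib{A_{k}}$, so it suffices to handle one homogeneous summand. ``Finite type'' provides a finite open cover $\{U_{\alpha}\}_{\alpha=1}^{L}$ of $X_{k}$ trivializing the bundle. On each patch $U_{\alpha}$ the local restriction of $\phi_{k}$ acts on $C_{0}(U_{\alpha},M_{n_{k}})$ and, by ideal preservation together with the fact that every linear endomorphism of $M_{n_{k}}$ is a sum of at most $n_{k}^{2}$ two-sided multiplications, takes the form $x\mapsto\sum_{i=1}^{n_{k}^{2}}a_{\alpha,i}(x)\,x\,b_{\alpha,i}(x)$ with bounded continuous matrix-valued coefficients (continuity of the coefficients is forced by compatibility with ideals defined by points of $U_{\alpha}$). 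Gluing these local expressions by a partition of unity subordinate to the finite cover produces a single global elementary operator of length at most $Ln_{k}^{2}$ realizing $\phi_{k}$. Hence $\ib{A_{k}}=\e{A_{k}}$ and therefore $\ib{A}=\e{A}=\icb{A}$.

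\emph{Necessity.} Assume $\icb{A}\subseteq\ue{A}$; I would argue in three steps. Step 1: $A$ is subhomogeneous. If not, separability yields a sequence of inequivalent irreducible representations $\pi_{m}$ with $\dim H_{\pi_{m}}\to\infty$, and by pasting along pairwise disjoint closed patches one assembles a completely bounded ideal-preserving map $\phi$ inducing on each $\pi_{m}$ a fixed norm-one endomorphism of $M_{\dim H_{\pi_{m}}}$ of minimal elementary length at least $c\,\dim H_{\pi_{m}}$. Since the length of any elementary operator descends to every quotient, no element of $\e{A}$ can approximate $\phi$ uniformly, contradicting the hypothesis. Step 2: subhomogeneity together with the hypothesis forces the dimension function on $\hat{A}$ to take only finitely many values with clopen level sets, yielding a finite direct sum decomposition $A=\bigoplus_{k=1}^{N}A_{k}$ with $A_{k}$ homogeneous of degree $n_{k}$ over its spectrum $X_{k}$. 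Step 3: each $A_{k}$ is of finite type. If some $X_{k}$ admits no finite trivializing cover for its $M_{n_{k}}$-bundle, one constructs a sequence of mutually twisting local charts and pastes them into an element $\phi\in\icb{A_{k}}$ whose uniform approximation by any elementary operator forces unbounded length, once again contradicting the hypothesis.

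\emph{Main obstacle.} The crux is Step 3 of the necessity direction: turning the geometric ``infinite type'' property of the bundle into a concrete ideal-preserving completely bounded map that provably resists uniform approximation by elementary operators. This requires a fine analysis of how the bundle's twisting controls the minimal length of an elementary representative on each chart, and is the place where the Haagerup-type description of $\ue{A_{k}}$ has to be meshed with bundle geometry. Step 1 is conceptually similar but much cleaner, since the length obstruction is already visible at the level of a single matrix quotient, and Step 2 reduces mostly to standard upper semicontinuity of the dimension function on $\hat{A}$ once subhomogeneity is in hand.
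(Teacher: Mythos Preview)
Your sufficiency argument is essentially the paper's, and is fine (modulo the typo $a_{\alpha,i}(x)$, which should not depend on $x$).

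The necessity direction, however, has its weights in the wrong places, and Step~2 as you have written it is a genuine gap.

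\textbf{Step 3 is not the crux; it is the easy part.} If a homogeneous summand $A_k=\Gamma_0(E)$ is not of finite type, the paper observes that for any finite family of bounded sections $a_1,\dots,a_m$ there is a point $t_0$ at which they fail to span the fibre $M_{n_k}(\mathbb C)$. Hence every elementary operator on $A_k$ has length at most $n_k^2-1$ at some point, whereas the normalized central trace $\tau$ has length exactly $n_k^2$ at every point. A compactness argument on the finite-dimensional fibre then gives a uniform lower bound on $\|\tau-\psi\|$ for all $\psi\in\e{A_k}$. No ``mutually twisting local charts'' or Haagerup-type description is needed; the single explicit map $\tau$ does the job.

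\textbf{Step 2 is the real difficulty, and your sketch does not address it.} Upper semicontinuity of the dimension function holds for \emph{every} subhomogeneous C$^*$-algebra; it never by itself forces the level sets to be clopen. The content here is: if $A$ is $n$-subhomogeneous with $n$-homogeneous ideal $J$ and $A$ is \emph{not} a direct sum of homogeneous algebras, one must produce an explicit $\phi\in\icb{A}\setminus\ue{A}$. The paper devotes two full sections to this. It first shows (Section~3) that $M(J)$ is the section algebra of a C$^*$-bundle over $\beta(\hat J)$ whose $n$-homogeneous ideal strictly contains $J$; this supplies a point $t_1\in\beta(\hat J)\setminus\hat J$ at which the fibre of $M(J)$ is still $M_n$ but the image of $A$ is block-diagonal of strictly smaller block size. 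One then takes $b\in M(J)$ which is off-diagonal at $t_1$ and sets $\phi(x)=bxb$; this maps $A$ into $J$, hence preserves ideals, but cannot be uniformly approximated by elementary operators because the coefficients of any $\psi\in\e{A}$ lie in $M(A)\subseteq M(J)$ and are forced to be block-diagonal at $t_1$. None of this is visible from semicontinuity alone.

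\textbf{Step 1 is also more delicate than you indicate.} ``Pasting along disjoint closed patches'' a family of norm-one endomorphisms on matrix quotients does not obviously yield a \emph{bounded} (let alone completely bounded) map on $A$; there is no a priori control on the norms of the liftings. The paper circumvents this by constructing $\phi$ directly as a series $\phi(x)=\sum_k e_kxf_k$ with $\sum e_k^2\le 1$, $\sum f_k^2\le 1$, so that $\phi$ is automatically a complete contraction; the work goes into arranging that the series is \emph{norm} convergent for every $x$ (via an excision-of-states argument, Lemma~2.1) while the induced maps on suitable quotients still have unbounded length (Lemma~2.3). Your phrase ``a fixed norm-one endomorphism of minimal elementary length at least $c\dim H_{\pi_m}$'' suggests choosing the fibre maps first and lifting, which is exactly the direction in which boundedness is not free.

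In short: your sufficiency and the overall architecture of the necessity are right, but you have swapped the easy step (finite type, handled by the central trace) with the hard one (passing from subhomogeneous to a direct sum of homogeneous pieces), and the latter needs a substantive construction that your outline does not contain.
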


We recall that a C$^*$-algebra $A$ is called {\em $n$-homogeneous} if all its irreducible
representations are of the same finite dimension $n$. 
(By the dimension of a representation $\pi$ we mean the dimension of the Hilbert space of $\pi$.)
Then by 
\cite{Fe}, \cite{TT} $A$ is isomorphic to the C$^*$-algebra $\Gamma_0(E)$ of all continuous
sections vanishing at $\infty$ of a locally trivial C$^*$-bundle $E$ with fibers isomorphic
to $\matn{\bc}$. ($E$ is just a usual vector bundle
such that the local trivializations, restricted to
fibers, are 
isomorphisms of C$^*$-algebras.) If the base space $\Delta$ of this bundle admits a
finite open covering $(\Delta_i)$ such that $E|\Delta_i$ is trivial for each $i$ (as a
C$^*$-bundle), then
$E$ is said to be of {\em finite type} \cite{Hu} and we shall say that in this case $A$
is of {\em finite type}.

We note that a weaker form of approximation is always possible: namely, for every 
C$^*$-algebra $A$ the set  $\e{A}$ is dense in $\icb{A}$ (and 
in $\ib{A}$) in the 
point norm - topology
\cite[2.3]{M1}, \cite[5.3.4]{AM}. However, there is in general no control on the norms in this approximation:
not every complete contraction $\phi\in\icb{A}$ can be approximated by a net of complete
contractions in $\e{A}$. If $A$ is a von Neumann algebra, then each  $\phi\in\cb{A}$ preserving
all weak* closed ideals can be approximated by complete contractions in $\e{A}$ in the point-
weak* topology if and only if $A$ is injective \cite{CS} (at least if the predual of $A$
is separable). For a general C$^*$-algebra $A$,
the question of when every complete contraction $\phi\in\icb{A}$ can be approximated 
pointwise by complete contractions in $\e{A}$ is connected to the theory of tensor products
of C$^*$-algebras and the complete answer is still not clear to the author.
Concerning elementary operators, we mention that in recent years the interest has shifted 
from spectral and structural theory (\cite{Cu}, \cite{Fi})
to questions related to the natural map $\mu$ from
the central Haagerup tensor product $M(A)\hg_Z M(A)$ into $\cb{A}$
(see \cite{S}, \cite{CS}, \cite{AM1}, \cite{M2}, \cite{AM} and the references there). In 
particular, the problem of when $\mu$ is isometric has been much studied by several authors
for special cases of C*-algebras (see \cite[Chapter 5]{AM})
and has been finally solved for general C*-algebras by Archbold, Somerset and Timoney in 
\cite{AST} and \cite{So}. Clearly the range of  $\mu$ is contained in $\ue{A}$
and the above Theorem \ref{th11} characterizes C$^*$-algebras in which the range of $\mu$
is as large as possible.

In one direction the proof of Theorem \ref{th11} is easy. Namely, if $A=\Gamma_0(E)$ with 
$E$ of finite type, the usual (finite) partition of unity argument reduces 
the proof to the case when $E\cong\Delta\times\matn{\bc}$ is trivial, so that $A\cong C_0(
\Delta,\matn{\bc})$ (continuous matrix valued functions vanishing at $\infty$). In this 
special
case a bounded linear map $\phi$, which preserves all ideals of the form $J_t=\{f\in A:\ f(t)=0\}$
($t\in\Delta$), decomposes into a bounded continuous collection of maps on fibers
$A/J_t\cong\matn{\bc}$; in other words, $\phi$ is in the set ${\rm B}_{Z_0}(A)=
{\rm B}_{Z_0}(\matn{Z_0})$ of bimodule maps over the center $Z_0=C_0(\Delta)$
of $A$. With $e_{ij}$ the standard matrix units in $\matn{\bc}$ and $\eta_{kl}:\matn{Z_0}\to Z_0$
the maps $\eta_{kl}([z_{ij}])=z_{kl}$, we have that
$$\phi^{jl}_{ki}:Z_0\to Z_0,\ \ \phi^{jl}_{ki}(z):=\eta_{kl}(\phi(z e_{ij}))\ \ (z\in Z_0)$$
are bimodule maps over $Z_0$ (thus, double centralizers since $Z_0$ is commutative), 
hence given by multiplications with certain elements
$c^{jl}_{ki}$ of the multiplier algebra $Z=C_b(\Delta)$ of $Z_0$. Then 
for $[z_{ij}]=\sum_{i,j=1}^{n}z_{ij} e_{ij}$ we have that
$$\phi([z_{ij}])=\sum_{i,j=1}^n\phi(z_{ij} e_{ij})=\sum_{i,j,k,l=1}^nc^{jl}_{ki}z_{ij}
e_{kl}=\sum_{k,l,r,s=1}^nc^{rl}_{ks}e_{ks}(\sum_{i,j=1}^nz_{ij}e_{ij})e_{rl},$$
so that $\phi$ is an elementary operator with coefficients in $\matn{Z}$, the multiplier 
C$^*$-algebra of $A=\matn{Z_0}$.

In certain special cases (say, if $A$ is prime) one can use the Akemann-Pedersen
characterization of C$^*$-algebras having only inner derivations \cite{AP} together with some
additional work to give a relatively short proof of a part 
of Theorem \ref{th11}. But in general  
the proof of Theorem \ref{th11}  requires construction 
of new classes of maps preserving ideals, which can not be uniformly approximated by elementary
operators. It is perhaps not very surprising that such maps exist if the dimensions of irreducible
representations of $A$ are not bounded. They can be taken to be of the
form $x\mapsto\sum_{k=1}^{\infty}e_kxf_k$, where the sum is norm convergent for all
$x\in A$, but not uniformly convergent. It will be shown in Section 2 that an appropriate
choice of the coefficients $e_k$ and $f_k$ is possible so that such a map can not
be approximated uniformly by elementary operators. 

But even if $A$ is subhomogeneous 
we do not always have the inclusion $\icb{A}\subseteq\ue{A}$. Consider, for example, the 
C$^*$-subalgebra $A_0$ of $C([0,1],{\rm M}_2(\bc))$ consisting of all $x$ such that
$x(0)$ is a diagonal matrix with $0$ on the $(2,2)$ position (or, alternatively, a general 
diagonal matrix) and the map $x\mapsto\phi(x)=e_{12}xe_{12}$, where $e_{12}$ has $1$ on the position
$(1,2)$ and $0$ elsewhere. It can be shown that $\phi$ preserves ideals but $\phi\notin\ue{A_0}$
(the details are just a special case of those in the proof of Lemma \ref{le28}, the paragraphs containing 
(\ref{33})--(\ref{37})). Such examples  suggest the way to a part of the proof of 
Theorem \ref{th11}.  
Namely, for a $n$-subhomogeneous C$^*$-algebra $A$ 
which is not a direct sum
of homogeneous C$^*$-algebras, it will be shown in Section 4 that the multiplier algebra
$M(J)$  of the $n$-homogeneous ideal $J$ of $A$ contains an element $b$ such that the
twosided multiplication $\phi:x\mapsto bxb$ maps $A$ into $A$ and $\phi\in\icb{A}\setminus
\ue{A}$ (provided that $J$
is essential in $A$, then the general case will be reduced to this situation). As a preparation
for this, we shall show in Section 3 that, if $J$ is not unital, $M(J)$ is the C$^*$-algebra 
of continuous sections of a (not necessarily locally trivial)
C$^*$-bundle over the Stone - Chech compactification $\beta(U)$ of the  spectrum $U$
of $J$ and the $n$-homogeneous ideal of $M(J)$ properly contains $J$ (Lemma \ref{le27}).
This will enable us to show in Section 4 (as the first step towards the proof of Theorem 
\ref{th11} in the case of subhomogeneous C$^*$-algebras) that there exists a point in 
$\beta(U)$ at which $A$ ($\subseteq M(J)$) looks
in a certain respect essentially like $A_0$ of the example mentioned above.

On the other hand, the
explanation that the homogeneous summands in Theorem \ref{th11} must be of finite type 
is simple and can be given right now.

\begin{proof}[Proof that $A$ must be of finite type] Assume that  a locally trivial C$^*$-bundle $E$ over a locally compact space $\Delta$
with fibers $\matn{\bc}$ is not of finite type. Then $E$
is not of finite type as a vector bundle by \cite[2.9]{Ph} and it follows that for any finite
set $\{a_1,\ldots,a_m\}$ of bounded continuous sections of $E$ there exists a point $t_0\in\Delta$
such that $$\dim{{\rm span}\{a_1(t_0),\ldots,a_m(t_0)\}}<n^2.$$
Indeed, if this was not the case, then the map $f:\Delta\times\bc^m\to E$,
$f(t;\lambda_1,\ldots,\lambda_m)=\sum_{j=1}^m\lambda_ja_j(t)$, would be a surjective
morphism of vector bundles and $E$ would be isomorphic to the subbundle $(\ker f)^{\perp}$
of $\Delta\times\bc^m$, hence of finite type by \cite[3.5.8]{Hu}. It follows that
for each elementary operator $\psi$ on $A=\Gamma_0(E)$ there is a point $t_0\in\Delta$
such that the induced elementary operator $\psi_{t_0}$ on $A/J_{t_0}\cong\matn{\bc}$ has
the length at most $n^2-1$. On the other hand the (normalized) central trace $\tau$ on $A$
(defined by $\tau(x)(t)=(1/n){\rm tr}\,x(t)$, $t\in\Delta$) preserves all (primitive ideals
$J_t$ hence all) ideals of $A$, hence $\tau\in\icb{A}$. But, denoting by $e_{i,j}$
($i,j=1,\ldots,n$) the usual matrix units in $\matn{\bc}$, we have that
$\tau(x)(t_0)=(1/n)\sum_{i,j=1}^ne_{i,j}x(t)e_{j,i}$, so that $\tau_{t_0}$ on $\matn{\bc}$ has
length $n^2$. Therefore $\tau_{t_0}$ has a positive distance $d$ to the closed set of all
elementary operators of length $\leq n^2-1$ on $\matn{\bc}$. This implies that the distance
of $\tau$ to $\e{A}$ is at least $d$, so $\tau\notin\ue{A}$.
\end{proof}
 
Throughout this paper we shall denote by $\hat{A}$ the spectrum of $A$ (= the set of all
equivalence classes of irreducible representations) and by $\check{A}$
the primitive spectrum of $A$ (= the set of all primitive ideals), equipped with the 
Jacobson topology.   
The norm and the weak* closure of a set $S$ will be denoted by $\normc{S}$ and
$\weakc{S}$, respectively.

\section{A reduction to subhomogeneous C$^*$-algebras}

\begin{lemma}\label{le21} Let $A$ be an irreducible C$^*$-subalgebra in $\bh$,
$x_1,\ldots,x_n$ arbitrary elements of $A$, $g\in A_+\setminus\{0\}$, $B=\normc{gAg}$
the hereditary C$^*$-subalgebra generated by $g$ and $\varepsilon>0$. If ${\rm rank}\,g>n$
then there exist
$e,f\in B_+$ such that $\|e\|=1=\|f\|$ and
$$\|ex_jf\|<\varepsilon\ \ (j=1,\ldots,n).$$
\end{lemma}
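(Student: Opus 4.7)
The plan is to transport the problem to the hereditary subalgebra $B$ acting irreducibly on the range space of $g$, and then build $e$ and $f$ from Kadison's transitivity theorem so that they mimic orthogonal rank-one projections from the bidual $B''$.

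As a first step I would establish that $B$ acts irreducibly on $\k := P\h$, where $P$ is the range projection of $g$. Because $A$ is irreducible on $\h$ and $B = \normc{gAg}$, the bidual satisfies $B'' = P\bh P = {\rm B}(\k)$, so $B$ is irreducible on $\k$; the crucial quantitative input is $\dim\k = {\rm rank}\, g > n$.

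Next I would pick a unit vector $\xi \in \k$. Since the $n$ vectors $Px_1\xi, \ldots, Px_n\xi$ span a subspace of $\k$ of dimension at most $n$, the rank hypothesis leaves strictly more room, so I can find a unit vector $\eta \in \k$ orthogonal to all of them; equivalently, $\inner{x_j\xi}{\eta} = 0$ for every $j$. Applying the positive-operator version of Kadison's transitivity theorem to the irreducible representation of $B$ on $\k$, I would produce $e, f \in B_+$ of norm $1$ that agree with the rank-one projections $\eta\eta^*$ and $\xi\xi^*$ on prescribed finite-dimensional subspaces of $\k$. Choosing these subspaces to contain $\xi$, $\eta$ and all the vectors $Px_j\xi$, $Px_j^*\eta$ yields the pointwise identities $e\eta = \eta$, $ex_j\xi = 0$, $f\xi = \xi$, and $fx_j^*\eta = 0$.

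The main obstacle is then to upgrade these pointwise identities to the uniform operator-norm bound $\|ex_jf\| < \varepsilon$: Kadison's theorem only constrains $e$ and $f$ on the chosen finite-dimensional subspaces, while the operator norm is a supremum over all unit vectors. I would handle this by iteratively enlarging the matching subspaces, absorbing into them at each stage the $f$-images (respectively $e$-images) of vectors that dominate the error, so that the discrepancy of $ex_jf$ from zero is pushed into a tail of controllably small norm. In conjunction with the positive estimate $e^2 \le e \le 1$ applied to the identity $\|ex_jf\|^2 = \|fx_j^* e^2 x_j f\|$, and using that $\inner{Px_jfv}{\eta} = \inner{v}{fx_j^*\eta} = 0$ for every $v$, this should yield the required quantitative bound.
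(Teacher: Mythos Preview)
Your setup is sound: $B$ acts irreducibly on $\k$, Kadison transitivity produces positive norm-one elements matching rank-one projections on any prescribed finite-dimensional subspace, and the pointwise identities $ex_j\xi=0$, $fx_j^*\eta=0$ follow. The gap is precisely where you locate it, and the proposed fix does not close it.

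The ``iterative enlargement'' scheme is not a proof. At each stage you adjoin vectors on which the current error is realised and re-run Kadison to force $e$ (or $f$) to vanish there; but Kadison gives you \emph{no} control of $e$ off the enlarged finite-dimensional subspace, so a fresh worst-case vector appears and you must enlarge again. There is no mechanism making the residual norm decrease: each modification of $e$ can undo previous gains for $f$-images you have not yet pinned down, and the process neither terminates nor yields a convergent sequence in $B$. The final sentence, invoking $\|ex_jf\|^2=\|fx_j^*e^2x_jf\|$ together with $\inner{Px_jfv}{\eta}=0$, is not enough either: knowing that $x_jf$ maps into $\ort{\eta}$ says nothing about $\|ex_jf\|$ unless $e$ is small on all of $\ort{\eta}\cap\k$, and Kadison does not give that.

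The paper closes this gap with a genuinely different tool: the excision of pure states (Akemann--Anderson--Pedersen). After obtaining $e\in B_+$ with $ex_j\eta=0$ via Kadison, one observes that the pure state $\omega_\eta$ annihilates $x_0:=\sum_j bx_j^*e^2x_jb\in B_+$ (where $b\in B_+$, $b\eta=\eta$). Excision then supplies $h$ in the unitisation of $B$ with $0\le h\le1$, $\omega_\eta(h)=1$ and $\|hx_0h\|<\varepsilon^2$; this last inequality is the operator-norm bound you were missing, and it yields $\|ex_jbh\|<\varepsilon$. Setting $f=|hb|$ finishes the argument. The point is that excision converts a single vanishing expectation value into a uniform two-sided compression estimate, something Kadison transitivity alone cannot do.
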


\begin{proof} Choose a unit vector 
$\eta\in\k:=[B\h]$. Note that $b=(b|\k)\oplus(0|\k^{\perp})$ for each $b\in B$ 
(since $B\ort{\k}
\subseteq\k\cap\ort{\k}=0$), that $B|\k$ is irreducible \cite[5.5.2]{Mu} and
$\dim\k>n$ since ${\rm rank}\,g>n$. Hence by the Kadison transitivity theorem there exists
$e\in B$ with $\|e\|=1$ such that $e$ annihilates the projections of all vectors
$x_j\eta$ to $\k$. Thus (since $e\k^{\perp}=0$)
$$ex_j\eta=0\ \ (j=1,\ldots,n).$$
Moreover, replacing $e$ by $e^*e$, we may assume that $e\in B_+$. By the algebraic 
irreducibility \cite[5.2.3]{Mu} there
exists $b\in B_+$ with $\|b\|=1$ and $b\eta=\eta$. Then the vector state $\omega_{\eta}(x)
:=\inner{x\eta}{\eta}$ annihilates the element
$$x_0:=\sumjn bx^*_je^2x_jb$$
of $B$. Since $\omega_{\eta}$ is a pure state on $B$ (by irreducibility of $B$ on $\k$),
by \cite{AAP} there exists a positive element $h$ in the unitisation of $B$ such that $\|h\|=1$, 
$$\|hx_0h\|<\varepsilon^2\ \ \mbox{and}\ \ \omega_{\eta}(h)=1.$$
This implies that $\|ex_jbh\|<\varepsilon$ for all $j=1,\ldots,n$, and (since  
$\|h\|=1$ and $\inner{h\eta}{\eta}=1$) $h\eta=\eta$. Set $f:=|hb|=|(bh)^*|$. Then
$f\in B_+$, $\|f\|=1$ (since $hb\eta=\eta$) and (using the polar decomposition 
$(bh)^*=uf$ of $(bh)^*$) we deduce that $\|ex_jf\|=\|ex_jbh\|<\varepsilon$ for all $j=1,\ldots,n$.
\end{proof}

\begin{lemma}\label{le22} If $A$ is separable and has an infinite dimensional irreducible
representation $\pi:A\to\bh$, then there exist two bounded sequences $(e_i)$ and $(f_i)$
in $A_+$ such that $\|\pi(e_i)\|=1=\|\pi(f_i)\|$, $e_ie_j=0=f_if_j$ if $i\ne j$ and the sum
\begin{equation}\label{21}\phi(x)=\sum_{n=1}^{\infty}e_nxf_n\end{equation}
is norm convergent for each $x\in A$.
\end{lemma}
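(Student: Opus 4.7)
The plan is to construct $(e_n),(f_n)$ iteratively by applying a variant of Lemma~\ref{le21} inside mutually orthogonal hereditary $C^*$-subalgebras of $A$. By separability, fix a norm-dense sequence $(x_k)_{k\geq 1}$ in the unit ball of $A$.

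First I would produce mutually orthogonal positive elements $g_n\in A_+$ with $\operatorname{rank}\pi(g_n)>n$. Pick a single $g\in A_+$ with $\pi(g)$ of infinite rank --- automatic when $\pi(A)\cap K(\h)=0$, and otherwise available by lifting an infinite-rank positive compact from $K(\h)\subseteq\pi(A)$ --- and apply continuous functional calculus: choose continuous $\phi_n$ on $\operatorname{sp}(g)$ vanishing at $0$, with pairwise disjoint supports, such that the spectral subspace of $\pi(g)$ over $\operatorname{supp}\phi_n$ has dimension exceeding $n$. Setting $g_n=\phi_n(g)$ gives the desired elements, and the hereditary subalgebras $B_n=\normc{g_nAg_n}$ satisfy $B_nB_m=0$ for $n\neq m$.

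At stage $n$, apply Lemma~\ref{le21} inside $B_n$ with inputs $x_1,\ldots,x_n$ and $\varepsilon=2^{-n}$. Although the lemma is literally stated for an irreducible embedding in $\bh$, its proof carries over to the present setting (an irreducible representation $\pi$ that need not be faithful): the two tools used --- Kadison's transitivity theorem and the Akemann--Anderson--Pedersen absorption --- each produce elements of $A$ whose $A$-norm is controlled, not merely their norm after composing with $\pi$. This yields $e_n,f_n\in(B_n)_+$ with $\|e_n\|=\|f_n\|=\|\pi(e_n)\|=\|\pi(f_n)\|=1$ and $\|e_nx_kf_n\|<2^{-n}$ for $k=1,\ldots,n$. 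The required orthogonality $e_ne_m=f_nf_m=0$ is automatic since $e_n,f_n\in B_n$ and $B_nB_m=0$.

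For the convergence of $\sum_n e_nxf_n$, the vanishing $f_nf_m=0$ kills every cross-term:
\[
\Bigl\|\sum_{n=N}^M e_nxf_n\Bigr\|^2=\Bigl\|\sum_{n=N}^M e_n(xf_n^2x^*)e_n\Bigr\|,
\]
and since $e_ne_m=0$ the positive summands have mutually orthogonal supports, so this norm equals $\sup_{N\leq n\leq M}\|e_nxf_n\|^2$. For $x=x_k$ the construction gives $\|e_nx_kf_n\|<2^{-n}$ whenever $n\geq k$, so this tail vanishes; for general $x\in A$, approximating by $(x_k)$ and using the uniform bounds on $(e_n),(f_n)$ shows $\|e_nxf_n\|\to 0$, so the partial sums are norm Cauchy. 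The principal obstacle is the inductive step's need for $A$-norm control on $\|e_nx_kf_n\|$ (rather than merely $\|\pi(e_nx_kf_n)\|$); this is why Kadison's theorem and the AAP absorption must be invoked directly in $A$, since passing to $\pi(A)$ and lifting back would in general destroy the norm estimate.
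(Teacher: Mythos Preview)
Your argument follows the same overall strategy as the paper's---build mutually orthogonal hereditary subalgebras, apply Lemma~\ref{le21} in each against a dense sequence, then sum---but handles the passage from $\pi$-norm to $A$-norm differently. The paper applies Lemma~\ref{le21} literally \emph{inside} $\pi(A)\subseteq\bh$, obtaining only $\|\dot e_n\pi(x_j)\dot f_n\|<2^{-n}$; it then lifts the orthogonal sequences $(\dot e_n),(\dot f_n)$ to $A_+$ via \cite[4.6.20]{KR} and upgrades to an $A$-norm estimate by sandwiching with $1-u_n$ for suitable $u_n$ in an approximate unit of $\ker\pi$, finally setting $e_n=\tilde e_n(1-u_n)\tilde e_n$ and $f_n=\tilde f_n(1-u_n)\tilde f_n$. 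You instead rerun the \emph{proof} of Lemma~\ref{le21} abstractly in $B_n\subseteq A$: since the Akemann--Anderson--Pedersen excision theorem is a statement about pure states on abstract C$^*$-algebras, applying it to the pure state $\omega_\eta\circ\pi$ on $B_n$ yields $\|hx_0h\|_A<\varepsilon^2$ directly, and the approximate-unit trick is bypassed. This is a legitimate simplification, at the cost of having to unpack Lemma~\ref{le21} rather than quote it.

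One small gap: your functional-calculus construction of the $g_n$ needs $\pi(g)$ to have infinite \emph{spectrum}, not merely infinite rank---if $\pi(g)$ happened to be a projection, no infinite family of disjointly supported $\phi_n$ could produce $\operatorname{rank}\pi(g_n)>n$. This is easily repaired: either choose $g$ so that $\pi(g)$ has infinite spectrum (possible since any maximal abelian self-adjoint subalgebra of $\pi(A)$ is infinite-dimensional), or bypass the issue entirely by lifting an orthogonal family $(\dot g_n)\subseteq\pi(A)_+$ to $A_+$ via \cite[4.6.20]{KR}, which is what the paper does.
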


\begin{proof} Since $\pi(A)$ is irreducible and $\h$ infinite dimensional, $\pi(A)$ must be
infinite dimensional and the same for any of its maximal abelian self-adjoint subalgebras \cite[4.6.12]{KR}.
Thus, by functional calculus we may find a sequence $(\dot{g}_i)$ in $\pi(A)_+$ with
$\|\dot{g}_i\|=1$, $\dot{g}_i\dot{g}_j=0$ if $i\ne j$ and ${\rm rank}\, \dot{g}_i>i$.
Set $P=\ker\pi$ and identify $A/P$ with $\pi(A)$. Let $(x_j)$ be a bounded sequence 
with dense span in $A$. By Lemma \ref{le21} for each $n$ there exist elements $\dot{e}_n$
and $\dot{f}_n$ in $\normc{(\dot{g}_n\pi(A)\dot{g}_n)}_+$ such that $\|\dot{e}_n\|=1=
\|\dot{f}_n\|$ and
\begin{equation}\label{22}\|\dot{e}_n\pi(x_j)\dot{f}_n\|<\frac{1}{2^n}\ \ (j=1,\ldots,n).
\end{equation}
Since the $\dot{g}_n$'s are orthogonal (that is, $\dot{g}_i\dot{g}_j=0$ if $i\ne j$), the
same holds for $\dot{e}_n$ and for $\dot{f}_n$. By \cite[4.6.20]{KR} we may lift $(\dot{e}_n)$
(and similarly $(\dot{f}_n)$) from $\pi(A)$ to orthogonal sequences $(\tilde{e}_n)$ 
(and $(\tilde{f}_n)$) of  norm $1$ elements
in $A_+$. Recall that, with $(u_k)$ an approximate unit in $P$, we have $\|\pi(x)\|=\lim
\|(1-u_k)x(1-u_k)\|$ for all $x\in A$, hence, from (\ref{22}) for each $n$ there exists
$u_n\in P$, $0\leq u_n\leq1$, such that
\begin{equation}\label{23}\|(1-u_n)\tilde{e}_nx_j\tilde{f}_n(1-u_n)\|<\frac{1}{2^n}\ \
(j=1,\ldots,n).\end{equation}
Set $e_n=\tilde{e}_n(1-u_n)\tilde{e}_n$ and $f_n=\tilde{f}_n(1-u_n)\tilde{f}_n$. Then
$e_ie_j=0=f_if_j$ if $i\ne j$,  $\|e_n\|=1=\|f_n\|$ (since $\|\pi(e_n)\|=1$,  
$\|\pi(f_n)\|=1$ and $\|e_n\|, \|f_n\|\leq1$), and (\ref{23}) implies that
\begin{equation}\label{24}\|e_nx_jf_n\|<\frac{1}{2^n}\ \ (j=1,\ldots,n).\end{equation}

Since $\|\sum_{n=1}^{\infty}e_n^2\|=\max_n\|e_n^2\|=1$ (by orthogonality) and 
$\|\sum_{n=1}^{\infty}f_n^2\|=1$, it follows that (\ref{21}) defines a (complete)
contraction $\phi$ from $A$ into the von Neumann envelope $\weakc{A}$ of $A$. We have 
$$\phi(x_j)=\sum_{n=1}^{j-1}e_nx_jf_n+\sum_{n=j}^{\infty}e_nx_jf_n,$$
where the sum on the right side is norm convergent by (\ref{24}). Since the sequence
$(x_j)$ has dense span in $A$ it follows that the sum (\ref{21}) is convergent
for each $x\in A$.
\end{proof}

If $p_i\in\bh$ ($i=1,\ldots,n$) are nonzero orthogonal projections and $\phi\in\e{\bh}$ is defined by 
$\phi(x)=\sum_{i=1}^np_ixp_i$, the distance of $\phi$ to the set $E_{n-1}$ of elementary operators
of length at most $n-1$ turns out to be (not $1$, but) at most $1/n$. (For a proof, let 
$\psi\in\e{\bh}$ be defined by $\psi(x)=\sum_{i,j=1}^n(\delta_{i,j}-\frac{1}{n})p_ixp_j$ and note that
$\phi(x)-\psi(x)=\frac{1}{n}pxp$, where $p=\sum_{i=1}^np_i$, so that $\|\phi-\psi\|=\frac{1}{n}$. To show that the length 
of $\psi$ is at most $n-1$, observe that the $n\times n$ matrix $[\delta_{i,j}-\frac{1}{n}]$ is
(a projection) of  rank 
$n-1$, therefore there exist $\alpha_{i,j},\beta_{i,j}
\in\bc$ such that $\delta_{i,j}-\frac{1}{n}=\sum_{k=1}^{n-1}\alpha_{k,i}\beta_{k,j}$ for all
$i,j$. Now, with $a_k:=\sum_{i=1}^n\alpha_{k,i}p_i$ and $b_k:=\sum_{i=1}^n\beta_{k,i}p_i$ we have that
$\psi(x)=\sum_{i,j=1}^n\sum_{k=1}^{n-1}\alpha_{k,i}\beta_{k,j}p_ixp_j=\sum_{k=1}^{n-1}a_kxb_k$
for all $x\in\bh$.) However, we shall only need
an asymptotic estimate stated in the following lemma.

\begin{lemma}\label{le220} For each $m\in\bn$ there exists $n(m)\in\bn$ such that for every 
$\theta\in\e{\bh}$ of the form 
$$\theta(x)=\sumin e_ixf_i\ \ (x\in\bh),$$
where $n\geq n(m)$ and $e_i,f_i\in\bh_+$ are norm $1$ elements satisfying $e_ie_j=0=f_if_j$
if $i\ne j$, the distance $d(\theta,E_m)$ of $\theta$ to the set $E_m$ of all elementary
operators of  length at most $m$ is at least $1/5$.
\end{lemma}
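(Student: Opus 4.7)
The plan is to argue by contradiction: assume that some $\psi(x)=\sum_{k=1}^m a_k x b_k$ in $E_m$ satisfies $\|\theta-\psi\|<1/5$, and derive an upper bound on $n$ depending only on $m$, which contradicts $n\geq n(m)$ once $n(m)$ is chosen large enough.

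First I would extract an $n$-dimensional ``matrix unit'' structure from the data. Since each $e_i\in\bh_+$ has norm one and the ranges of the $e_i$'s are mutually orthogonal (because $e_ie_j=0$ for $i\neq j$), pick for each $i$ a unit vector $\eta_i$ in the closed range of $e_i$ satisfying $\|e_i\eta_i-\eta_i\|<\delta$ for a small $\delta>0$ to be fixed later; analogously pick $\xi_i$ for $f_i$. The families $\{\eta_i\}$ and $\{\xi_i\}$ are then orthonormal. A direct computation using $e_l\eta_j=0$ for $l\neq j$ and $f_l\xi_i=0$ for $l\neq i$ gives
\[
\theta(\eta_j\otimes\xi_i^*)=\delta_{ij}\,(e_i\eta_i)\otimes(f_i\xi_i)^*\approx\delta_{ij}\,\eta_i\otimes\xi_i^*,
\]
so on the block $P\bh Q$, with $P=\sum_i\eta_i\eta_i^*$ and $Q=\sum_i\xi_i\xi_i^*$, the map $\theta$ acts (up to $O(\delta)$) as the diagonal projection $\Delta_n$ on $\matn{\bc}\cong P\bh Q$.

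Next I would compress $\psi$ to $M_n$: the map $\tilde\psi(x):=P\psi(x)Q=\sum_k(Pa_kP)\,x\,(Qb_kQ)$ is an elementary operator of length at most $m$ on $M_n$, and the hypothesis gives $\|\tilde\psi-\Delta_n\|<1/5+O(\delta)$. Writing $\tilde\psi(u)=\sum_{k=1}^m A_kuB_k$ and introducing, for each pair of indices $(r,i)$ and $(j,s)$, the vectors $v_{r,i},w_{j,s}\in\bc^m$ whose $k$-th entries are $(A_k)_{ri}$ and $(B_k)_{js}$, the condition $(\tilde\psi(E_{ij}))_{rs}=v_{r,i}\cdot w_{j,s}\approx\delta_{ij}\delta_{ri}\delta_{si}$ repackages the problem as a linear-algebraic question about vectors in $\bc^m$: one wants approximate biorthogonality $v_{i,i}\cdot w_{j,j}\approx\delta_{ij}$ together with many off-diagonal near-vanishing conditions.

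The main step, and the hardest, is converting this linear-algebraic setup into a contradiction. Forming the $n\times n$ matrix $M$ with $M_{ij}=v_{i,i}\cdot w_{j,j}$, one has $\mathrm{rank}(M)\leq m$, $\mathrm{Re}\,\mathrm{tr}(M)\geq n(1-1/5-O(\delta))$, and $|M_{ij}-\delta_{ij}|\leq 1/5+O(\delta)$. Combining the trace--rank inequality $|\mathrm{tr}(M)|\leq m\|M\|$ with a sufficiently sharp bound on $\|M-I_n\|$ then forces an upper bound $n\leq n(m)$, producing the required contradiction. The hard part will be extracting a sharp enough bound on $\|M-I_n\|$: a naive entrywise estimate $\|M-I_n\|\leq n(1/5+O(\delta))$ handles only the small-$m$ range, and for larger $m$ one must feed in the additional off-diagonal constraints $v_{r,i}\cdot w_{j,s}\approx 0$ for $(r,i,j,s)$ off the ``full diagonal,'' which cut down the effective degrees of freedom in $V$ and $W$ enough for the trace--rank argument to close with the universal constant $1/5$.
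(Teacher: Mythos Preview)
Your reduction to $M_n$ is sound: compressing $\theta$ and $\psi$ by $P$ and $Q$ does reduce the question to showing that the diagonal projection $\Delta_n$ on $M_n$ lies at distance at least (roughly) $1/5$ from $E_m$ for $n$ large. The gap is in the ``main step'': the trace--rank argument you outline does not close, and you essentially concede this. From the matrix $M$ you only retain $\mathrm{rank}(M)\le m$ and $|M_{ij}-\delta_{ij}|\le c\approx 1/5$; feeding these into $|\mathrm{tr}(M)|\le m\|M\|$ and $\|M\|\le 1+\|M-I_n\|\le 1+cn$ gives $n(1-c)\le m(1+cn)$, which is vacuous as soon as $m\ge (1-c)/c=4$. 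Your final sentence asserts that the extra constraints $v_{r,i}\cdot w_{j,s}\approx 0$ will ``cut down the effective degrees of freedom\dots enough for the trace--rank argument to close,'' but no inequality is produced that actually bounds $\|M\|$ (or any other relevant quantity) uniformly in $n$. Without that, there is no proof; and it is not at all clear that the trace--rank route can be salvaged with the constant $1/5$.

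The paper avoids the matricial reduction entirely and uses a packing argument instead. It introduces the contraction
\[
\kappa:\e{\bh}\to B(\bh^{\sharp},\bh),\qquad \kappa\Bigl(\sum_j a_j\otimes b_j\Bigr)(\rho)=\sum_j\rho(a_j)b_j,
\]
and evaluates $\kappa(\theta-\psi)$ at norm-one functionals $\rho_i$ chosen (from $e_ie_j=0$, $\|e_i\|=1$) so that $\rho_i(e_j)=\delta_{ij}$. This gives $\|f_i-\sum_j\rho_i(a_j)b_j\|\le\varepsilon:=\|\theta-\psi\|$, so each $f_i$ lies within $2\varepsilon$ of the unit ball $U$ of the $m$-dimensional space $V=\mathrm{span}\{b_1,\dots,b_m\}$. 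Since $\|f_i-f_j\|=1$ for $i\ne j$, the approximants $h_i\in U$ satisfy $\|h_i-h_j\|\ge 1-4\varepsilon>1/5$ when $\varepsilon<1/5$. Passing to a Euclidean norm on $V$ via an Auerbach basis, one gets $n$ points at pairwise $\ell_2$-distance $>1/(5\sqrt{m})$ inside a ball of radius $\sqrt{m}$, which is impossible once $n$ exceeds an explicit $n(m)$. The decisive idea you are missing is this passage, via dual functionals, from the operator inequality to a packing problem in an $m$-dimensional space; your proposal offers no substitute for it.
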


\begin{proof}Denote by $\bh^{\sharp}$ the dual of $\bh$ and note that the map
$$\kappa:\e{\bh}\to{\rm B}(\bh^{\sharp},\bh),\ \ \kappa(\sumin a_i\otimes b_i)(\rho)=
\sumin\rho(a_i)b_i\ (\rho\in\bh^{\sharp})$$
is contractive, where the elements $\psi=\sum a_i\otimes b_i\in\e{\bh}$ have the usual
operator norm  $\|\psi\|=\sup\{\|\sum a_ixb_i\|:\ x\in\bh,\ \|x\|\leq1\}$. This
follows from
$$\|\kappa(\psi)\|=\sup\{|\sum\rho(a_i)\omega(b_i)|:\ \omega,\rho\in\bh^{\sharp},\ \|\omega\|
\leq1,\ \|\rho\|\leq1\},$$
by  noting first that the supremum does not change if we restrict $\omega$ and $\rho$ to be of
rank $1$ (since the unit ball of  $\bh^{\sharp}$ is the weak* closure of the
convex hull of rank one functionals of the form $x\mapsto\langle x\xi,\eta\rangle$, where 
$\xi,\eta\in\h$ have norm at most $1$) and then noting that the  supremum is  equal to
$$\sup\{\|\sum a_ixb_i\|:\ x\in\bh,\ \|x\|\leq1,\ \mbox{rank}\ x\leq1\},$$
hence dominated by $\|\psi\|$. 

Let $\theta$ be as in the Lemma (but with $n$ arbitrary). Given $\psi\in E_m$ of the form
$$\psi(x)=\sum_{j=1}^ma_jxb_j,$$
let $U$ be the closed unit ball of $V:={\rm span}\{b_1,\ldots,b_m\}$. By the orthogonality of
the $e_i$'s we may choose $\rho_i$ in the unit ball of $\bh^{\sharp}$ so that $\rho_i(e_j)=
\delta_{i,j}$, hence
$$\varepsilon:=\|\theta-\psi\|\geq\|\kappa(\theta)-\kappa(\psi)\|\geq
\|f_i-\sum_{j=1}^m\rho_i(a_j)b_j\|.$$
This shows that the distance of $f_i$ to $V$ is at most $\varepsilon$ and, since $\|f_i\|=1$,
it follows that ${\rm dist}(f_i,U)\leq2\varepsilon$. Thus, we may choose $h_i\in U$ with
$\|f_i-h_i\|\leq2\varepsilon$, from which we have (since $\|f_i-f_j\|=1$ if $i\ne j$)
$$\|h_i-h_j\|\geq\|f_i-f_j\|-\|f_i-h_i\|-\|f_j-h_j\|\geq1-4\varepsilon.$$
Suppose  that $\varepsilon<1/5$, so that $\|h_i-h_j\|>1/5$ for
all $i\ne j$. If we equip $V$ with a suitable Euclidean norm $\|\cdot\|_2$ (by proclaiming
an Auerbach basis of $V$ to be orthonormal), then $\|\xi\|/\sqrt{m}\leq\|\xi\|_2\leq\|\xi\|
\sqrt{m}$ for all $\xi\in V$. Thus, $\|h_i-h_j\|_2>1/(5\sqrt{m})$ if $i\ne j$, while all the
vectors $h_i$ ($i=1,\ldots,n$) are contained in the same at most $m$-dimensional Euclidean
ball of radius $\sqrt{m}$. This is clearly impossible if $n$ is large enough.
\end{proof}

\begin{lemma}\label{le23} Suppose that $A$ is separable. If $\icb{A}\subseteq\ue{A}$ then
$A$ is subhomogeneous, that is, $\sup_{[\pi]\in\hat{A}}\dim\pi<\infty$.
\end{lemma}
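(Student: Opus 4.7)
The plan is to prove the contrapositive: if $A$ is not subhomogeneous then $\icb{A}\not\subseteq\ue{A}$. Assuming first the main case, in which $A$ has an infinite-dimensional irreducible representation $\pi:A\to\bh$, Lemma \ref{le22} supplies orthogonal sequences $(e_n),(f_n)\subseteq A_+$ of norm-one elements such that $\phi(x):=\sum_{n=1}^{\infty}e_nxf_n$ converges in norm for each $x\in A$. (The remaining possibility, that all irreducible representations of $A$ are finite-dimensional but $\sup_{[\pi]\in\hat{A}}\dim\pi=\infty$, would be handled by an analogous construction in which the $e_n,f_n$ are pulled from a sequence of inequivalent finite-dimensional irreducible quotients of unboundedly large dimension; the automatic orthogonality of distinct primitive components in $A_+$ takes the place of Lemma \ref{le21} in that situation.)

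Next I would verify that $\phi\in\icb{A}$. The orthogonality of the $e_n$'s and $f_n$'s gives
$$\Bigl\|\sum_n e_nxf_n\Bigr\|^2=\Bigl\|\sum_n f_nx^*e_n^2xf_n\Bigr\|=\sup_n\|e_nxf_n\|^2\leq\|x\|^2,$$
exactly as in the last paragraph of the proof of Lemma \ref{le22}, so $\phi$ is a contraction; the same computation for matrix amplifications shows it is completely contractive. Each summand $x\mapsto e_nxf_n$ preserves every closed two-sided ideal of $A$, so the norm-convergent sum does too. Hence $\phi\in\icb{A}$.

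The heart of the proof is to show $\phi\notin\ue{A}$, which I would obtain by imitating the $\kappa$-argument of Lemma \ref{le220}. Suppose for contradiction that some $\psi(x)=\sum_{j=1}^{m}a_jxb_j\in\e{A}$ satisfies $\|\phi-\psi\|<1/5$. Extend $\pi$ to $M(A)$ and, for each $i$, pick a unit vector $\eta_i\in\h$ lying in the spectral range of $\pi(e_i)$ near $1$; by the orthogonality of the $\pi(e_i)$'s these vectors are mutually orthogonal. The vector states $\rho_i(a):=\inner{\pi(a)\eta_i}{\eta_i}$ then satisfy $\|\rho_i\|=1$ and $\rho_i(e_n)=\delta_{ni}$ (up to an arbitrarily small error controlled by how close $\pi(e_i)$ is to having $1$ as an eigenvalue). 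Because $\pi(e_n)\eta_i=\delta_{ni}\eta_i$ (up to the same small error), only the single term $e_ixf_i$ of $\phi(x)$ survives after pairing with $\rho_i$, so a computation parallel to the contractivity of the map $\kappa$ in Lemma \ref{le220}, combined with Kaplansky density and the isometric nature of $\pi$ on $A$, yields
$$\Bigl\|f_i-\sum_{j=1}^{m}\rho_i(a_j)b_j\Bigr\|\leq\|\phi-\psi\|<\frac{1}{5}$$
for every $i\in\bn$. The final dimension-counting step of Lemma \ref{le220}, applied to $f_1,\dots,f_{n(m)}$, then provides the required contradiction.

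The main obstacle I foresee is precisely this adaptation of the $\kappa$-inequality to an infinite sum: a naive application of Lemma \ref{le220} only bounds the distance from a finite truncation $\phi_N(x)=\sum_{n\leq N}e_nxf_n$ to $\psi$, and $\|\phi-\phi_N\|$ need not tend to zero in operator norm. The remedy sketched above, namely pairing $\phi-\psi$ against the vector-state functionals $\rho_i$ directly, exploits the fact that the infinite sum collapses to a single surviving term after this pairing and therefore avoids truncation altogether.
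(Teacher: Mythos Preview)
For the infinite-dimensional case your sketch is essentially right and close to the paper's argument, though the paper handles the ``infinite sum'' obstacle differently and more cleanly: after passing to $\bh$ via $\pi$ and extending $\dot\phi$ to a weak*-continuous $\bar\phi$ on $\bh$ (with $\|\bar\phi-\bar\psi\|_{\bh}=\|\dot\phi-\dot\psi\|$ by Kaplansky density), it compresses by the range projections $P_N,Q_N$ of $\sum_{n\le N}\pi(e_n)$ and $\sum_{n\le N}\pi(f_n)$. Orthogonality turns $P_N\bar\phi Q_N$ into the \emph{finite} sum $x\mapsto\sum_{n\le N}\pi(e_n)x\pi(f_n)$ while $\|P_N\bar\phi Q_N-P_N\bar\psi Q_N\|<1/5$ persists, so Lemma~\ref{le220} applies verbatim. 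Your direct $\kappa$-pairing can be made rigorous along the same lines, but note that the inequality you want really lives in $\bh$, not in $A$: the $\kappa$-argument requires rank-one operators, which reach $\pi(A)$ only after Kaplansky approximation, so the displayed bound on $\|f_i-\sum_j\rho_i(a_j)b_j\|$ needs that detour rather than the bare reference to Lemma~\ref{le220}.

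The genuine gap is your parenthetical treatment of the second case. There is no ``automatic orthogonality of distinct primitive components in $A_+$'': a separable C*-algebra with only finite-dimensional irreducibles is not a direct sum of its primitive quotients, and lifts of elements from distinct $A/P_n$ have no reason to be orthogonal in $A$. Worse, even granting orthogonality, pulling a \emph{single} pair $(e_n,f_n)$ from each quotient would make the induced map $\phi_n$ on $A/P_n$ have length one, and Lemma~\ref{le220} gives nothing. The paper devotes more than half of the proof of Lemma~\ref{le23} to this case: one first locates Hausdorff points in $\check A$ and extracts a sequence $(P_n)$ with $P_n\notin\overline{\{P_k:k\ne n\}}$ and $\dim(A/P_n)>n(n+1)$; then recursively builds positive contractions $g_n\in A$ with $\pi_n(g_n)=1$ and $\sum_n g_n\le1$; then, \emph{inside} each $\pi_n(A)\cong{\rm M}_{r(n)}(\bc)$, invokes Lemma~\ref{le21} to obtain $n$ mutually orthogonal pairs $(\dot e_{ni},\dot f_{ni})$, so that the induced $\phi_n$ has length $n$; and finally lifts these back to $A$ while preserving $\sum_{n,i}e_{ni}^2\le1$ via \cite[4.6.21]{KR} and an approximate-unit adjustment. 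None of this is automatic, and Lemma~\ref{le21} is used, not bypassed.
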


\begin{proof}First we will show that all irreducible representations of $A$  must
be finite dimensional. Suppose the contrary, that $\pi:A\to\bh$ is an infinite dimensional
irreducible representation and consider the map $\phi$ defined in Lemma \ref{le22}.
Clearly $\phi\in\icb{A}$. Denote by $\dot{\phi}$ the map on $\dot{A}:=A/\ker\pi$ induced by
$\phi$. From the norm convergent series (\ref{21}) we have that $\dot{\phi}(x)=
\sum_{n=1}^{\infty}\pi(e_n)x\pi(f_n)$ ($x\in\pi(A)$) and by the same formula $\dot{\phi}$
can be extended uniquely to a weak* continuous (complete) contraction $\weakc{\phi}$ on
$\bh$ (the weak* closure of $\pi(A)$). If $\phi\in\ue{A}$, then $\dot{\phi}\in\ue{\dot{A}}$
and (since the norm of any weak* continuous operator on $\pi(A)$ agrees with the norm of its
weak* continuous extension to $\weakc{\pi(A)}$, a consequence of the Kaplansky density 
theorem) $\weakc{\phi}\in\ue{\bh}$. Thus, there exists $\psi\in\e{\bh}$, say $\psi(x)=
\sum_{j=1}^{m} a_jxb_j$, such that 
\begin{equation}\label{230}\|\weakc{\phi}-\psi\|<\frac{1}{5}.\end{equation}
Now, for each $N\in\bn$ denote by $P_N$ and $Q_N$ the projections onto $\sum_{n=1}^N\pi(e_n)\h$
and $\sum_{n=1}^N\pi(f_n)\h$, respectively. From orthogonality of each of the sequences
$(e_n)$ and $(f_n)$, the operator $P_N\weakc{\phi}Q_N$ has the form
$P_N\weakc{\phi}Q_N(x)=\sum_{n=1}^N\pi(e_n)x\pi(f_n)$. But (\ref{230}) implies that
$\|P_N\weakc{\phi}Q_N-P_N\psi Q_N\|<1/5$ for all $N$ and, since $P_N\psi Q_N$ is an
elementary operator of length at most $m$, this contradicts Lemma \ref{le220}. 

Thus for each irreducible representation $\pi$ the C$^*$-algebra $\pi(A)$ is isomorphic to 
${\rm M}_r(\bc)$ for some $r\in\bn$,  we may identify $\hat{A}$ with $\check{A}$ and each primitive
ideal $P$ of $A$ is maximal. A point $P\in\check{A}$ is called {\em Hausdorff} (or separated) if for
each $Q\in\check{A}$, $Q\ne P$, there exist disjoint open neighborhoods of $P$ and $Q$ in
$\check{A}$. (Note that in our situation singletons are automatically closed sets since
primitive ideals are maximal.) By \cite[3.9.4]{Dix} the set $S$ of Hausdorff points is
dense in $\check{A}$. If $S$ is finite, then $S=\check{A}$, $A$ is finite dimensional and
the proof is finished in this case. So we may assume that $S$ is infinite. Since for each
$g\in A_+$ the trace function $[\pi]\mapsto {\rm tr}\, \pi(g)$ is lower semicontinuous on
$\hat{A}$ \cite{Pe}, the same holds for the rank function (for ${\rm rank}\, \pi(g)=
\sup_n{\rm tr}\,\sqrt[n]{\pi(g)}$ if $\|g\|\leq1$). Thus, if we assume that $\sup_{[\pi]\in
\hat{A}}\dim\pi=\infty$, then there exists a sequence $(\sigma_k)$ in $S$ with $\dim\sigma_k$
tending to $\infty$ as $k\to\infty$. Suppose first that there exists a limit point
$\sigma$ of $(\sigma_k)$ in $\hat{A}$.
Since $\sigma_1$ is a Hausdorff point, there exist disjoint open neighborhoods $U_1$ of 
$\sigma_1$ and $V_1$ of $\sigma$. Put $[\pi_1]=\sigma_1$ and choose any $[\pi_2]\in V_1\cap
(\sigma_k)$ such that $\dim\pi_2>2\cdot 3$. Since $[\pi_2]$ is a Hausdorff point, there exist
disjoint open neighborhoods $U_2\subseteq V_1$ of $[\pi_2]$ and $V_2\subseteq V_1$ of $\sigma$.
Continuing in this way, we find a sequence $([\pi_k])\subseteq\hat{A}$  
such that $\dim\pi_k>k(k+1)$, and open neighborhoods $U_k$ of $[\pi_k]$ and
$V_k$ of $\sigma$ such that $U_k\cap V_k=\emptyset$ and $U_{k+1},V_{k+1}\subseteq V_k$.
In particular $U_n\cap\cup_{k\ne n}U_k=\emptyset$, hence $[\pi_k]\notin U_n$ if $k\ne n$, 
which implies that the kernel $P_n$ of
$\pi_n$ is not contained in the closure of the set $\{P_k:\ k\ne n\}$. If the sequence
$(\sigma_k)$ has no limit points, then we simply let $([\pi_k])$ be a subsequence with
$\dim\pi_k>k(k+1)$ and then again $P_n=\ker\pi_n$ is not in the closure of $\{P_k:\ k\ne n\}$.
Setting $R_n=
\cap_{k\ne n}P_k$, this means that $P_n$ does not contain $R_n$, hence $P_n+R_n=A$ since $P_n$
is maximal. Since $\pi_n(A)$ is of the form ${\rm M}_r(\bc)$ for some $r>n(n+1)$,
there exist  mutually orthogonal projections  $\pi_n(g_{ni})$ ($i=1,\ldots,n)$ in $\pi_n(A)$ such that 
${\rm rank}\,\pi(g_{ni})>n$ and $\sumin\pi_n(g_{ni})=1$. These may be lifted to mutually
orthogonal positive contractions $g_{ni}$ in $A$ \cite[4.6.20]{KR}. Moreover, since $R_{n+}+
P_{n+}=A_+$
and $P_n=\ker\pi_n$, we may achieve that $g_{ni}\in R_n$. Set $\tilde{g}_n=\sumin g_{ni}$
and define recursively $g_1=\tilde{g}_1$, $g_n=(1-g_1-\ldots-g_{n-1})\tilde{g}_n(1-g_1
-\ldots-g_{n-1})$. Then $\sum_{n=1}^mg_n\leq1$ for all $m$ (by an induction, using that
$h^2\leq h$ if $0\leq h\leq1$), hence 
$\sum_{n=1}^{\infty}g_n\leq1$ (in the von
Neumann envelope of $A$) and $\pi_n(g_n)=1$ since $\pi_n(\tilde{g}_n)=1$ and 
$g_m\in R_m\subseteq P_n=\ker\pi_n$
if $m\ne n$.

Let $(x_j)$ be a sequence  with a dense span in $A$ and $\|x_k\|\leq1$. By Lemma \ref{le21} there exist
positive norm $1$ elements $\dot{e}_{ni}$ and $\dot{f}_{ni}$ in $\pi_n(g_{ni}Ag_{ni})$
such that 
\begin{equation}\label{25}\|\dot{e}_{ni}\pi_n(x_j)\dot{f}_{ni}\|<\frac{1}{n2^n}\ \ (i,j=1,\ldots,n).
\end{equation}
Note that $\sumin\dot{e}_{ni}\leq1$ (and similarly for $\dot{f}_{ni}$) by mutual orthogonality of the
projections $\pi_n(g_{ni})$ for a fixed $n$. For each $n$ we may lift $\dot{e}_{n1}$ to a positive element $e_{n1}$ in $A$ such that
$e_{n1}\leq g_n$ since $\dot{e}_{n1}\leq\pi_n(g_n)=1$ (see \cite[4.6.21]{KR}).
Assuming inductively that for some $i<n$ we already have elements $e_{nj}$ 
($j=1,\ldots,i$) in $A_+$ such that $\pi_n(e_{nj})=\dot{e}_{nj}$ and 
$e_{n1}+\ldots+e_{ni}\leq g_n$, then
by \cite[4.6.21]{KR} we may find $e_{n,i+1}$ in $A_+$ such that $\pi_n(e_{n,i+1})=\dot{e}_{n,i+1}$ and 
$e_{n,i+1}\leq g_n-(e_{n1}+\ldots+e_{n,i})$ since $\dot{e}_{n,i+1}\leq1-\dot{e}_{ni}-
\ldots-\dot{e}_{ni}=\pi_n(g_n-e_{n1}-
\ldots-e_{ni})$. Thus we may find $e_{n,i}$ so that $\sumin e_{ni}\leq g_n$ and it follows that 
\begin{equation}\label{26}\sum_{n=1}^{\infty}\sumin e_{ni}\leq1.\end{equation}
Similarly, there exist elements $f_{ni}\in A_+$ such that $\pi_n(f_{ni})=\dot{f}_{ni}$
and $$\sum_{n=1}^{\infty}\sumin f_{ni}\leq1.$$

Given $u\in P_n$ with $0\leq u\leq1$, we may replace  the elements $e_{ni}$ ($i=1,
\ldots,n$, $n$ fixed) by $|(1-u)e_{ni}|^2$ without violating (\ref{26}) (since 
$e_{ni}(1-u)^2e_{ni}\leq e_{ni}^2\leq e_{ni}$). 
Choosing $u$ from an (increasing) approximate unit of $P_n$, we have from (\ref{25}) that
$$\begin{array}{lll}\inf_u\|e_{ni}(1-u)^2e_{ni}x_jf_{ni}\|&\leq&
\lim_u\|(1-u)e_{ni}x_jf_{ni}\|\\
&=&\|\dot{e}_{ni}\pi_n(x_j)\dot{f}_{ni}\|\\
&<&\frac{1}{n2^n}\ \ (i,j=1,\ldots,n).\end{array}$$
Thus, we may assume that $e_{ni}$ and $f_{ni}$ have been chosen so that (note that
$e^2\leq e$ if $0\leq e\leq1$)
\begin{equation}\label{27}\sum_{n=1}^{\infty}\sumin e_{ni}^2\leq1,\ \
\sum_{n=1}^{\infty}\sumin f_{ni}^2\leq1,\end{equation}
\begin{equation}\label{28}\pi_n(e_{ni})\pi_n(e_{nj})=0=\pi_n(f_{ni})\pi_n(f_{nj})\ \mbox{if}\ 
i\ne j,\ \ \|\pi_n(e_{ni})\|=1=\|\pi_n(f_{ni})\| \end{equation}
\begin{equation}\label{29}\mbox{and}\ \ \ \ \ \|e_{ni}x_jf_{ni}\|<\frac{1}{n2^n}\ (i,j=1,\ldots,n).
\end{equation}

By (\ref{27}) we may define a (complete) contraction $\phi:A\to\weakc{A}$ by
\begin{equation}\label{210}\phi(x)=\sum_{n=1}^{\infty}\sumin e_{ni}xf_{ni}\ \ (x\in A).
\end{equation}
Since the sequence $(x_j)$ has dense span in $A$, (\ref{29}) implies that the series
(\ref{210}) is norm convergent for each $x\in A$, consequently $\phi\in\icb{A}$.

If $\|\phi-\psi\|<1/5$ for some $\psi\in\e{A}$ of length (say) $m$, then also 
\begin{equation}\label{271}\|\phi_n-\psi_n\|<1/5,\end{equation}
where $\phi_n$ and $\psi_n$ are the maps on 
$A_n:=\pi_n(A)\cong A/P_n\cong{\rm M}_{r(n)}(\bc)$
induced by $\phi$ and $\psi$ (respectively). Since $\pi_n(e_{mi})=0$ if $m\ne n$ (for $e_{mi}\leq g_m$),
$$\phi_n(\dot{x})=\sumin\dot{e}_{ni}\dot{x}\dot{f}_{ni}\ \ \mbox{for all}\ \dot{x}\in
A/P_n.$$ 
Since the length of $\psi_n$ is at most $m$ for each $n$, by Lemma \ref{le220} the inequality (\ref{271}) can not hold for all $n$,
hence $\|\phi-\psi\|\geq1/5$ and $\phi\notin\ue{A}$. 
\end{proof}

\section{The multiplier algebra of a homogeneous C$^*$-algebra}

Recall that a C$^*$-algebra $A$ is called  $n$-subhomogeneous ($n\in\bn$) if $n$
is the maximal dimension of irreducible representations of $A$. Then the intersection
of the kernels of all irreducible representations of dimension at most $n-1$ is an
ideal $J$ of $A$ such that all irreducible representations of $J$ are $n$-dimensional.
$J$ is called the {\em $n$-homogeneous ideal of $A$}; it is the largest ideal of $A$
which is $n$-homogeneous as a C$^*$-algebra.

For an ideal $J$ in $A$ we shall denote by $J^{\perp}$ the annihilator of $J$ in $A$.
Note that the left annihilator is equal to the right annihilator, that is, $aJ=0$ if
and only if $Ja=0$ ($a\in A$).

\begin{lemma}\label{le24} Suppose that $A$ is $n$-subhomogeneous, $J$ is the 
$n$-homogeneous ideal of $A$, $B=A/\ort{J}$, $K$ is the $n$-homogeneous ideal of $B$
and $q:A\to B$ is the quotient map. Then $q(J)=K$ and $K$ is an essential ideal in $B$.
\end{lemma}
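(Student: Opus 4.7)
The plan has three steps. First, I would verify that $q|_J$ is injective: for $a\in J\cap\ort{J}$, we have $aa^*\in J\cdot\ort{J}=0$ (since $\ort{J}$ is a self-adjoint two-sided ideal annihilating $J$), so $a=0$. Hence $q|_J$ is an isometric $*$-isomorphism onto $q(J)$, making $q(J)$ a closed $n$-homogeneous ideal of $B$. Since $K$ is by definition the largest $n$-homogeneous ideal of $B$, this gives $q(J)\subseteq K$.

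Second, for $K\subseteq q(J)$, I would use the standard fact that every closed ideal in a C$^*$-algebra is the intersection of the primitive ideals containing it. It then suffices to show that every primitive ideal $L$ of $B$ satisfying $q(J)\subseteq L$ also contains $K$. Writing $L=\ker\sigma$ for an irreducible representation $\sigma$ of $B$, the pullback $\pi:=\sigma\circ q$ is an irreducible representation of $A$ whose kernel contains $\ort{J}+J$. In particular $J\subseteq\ker\pi$, so $\pi$ factors through $A/J$. But $A/J$ embeds in $\prod\{\varrho(A):\varrho\text{ irreducible},\ \dim\varrho\le n-1\}$ (by the defining formula for $J$), so it is $(n-1)$-subhomogeneous. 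Therefore $\dim\sigma=\dim\pi\le n-1$, which forces $K\subseteq\ker\sigma=L$. Intersecting over all such $L$ yields $K\subseteq q(J)$, and hence $K=q(J)$.

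Third, for the essentiality of $K$ in $B$, I would take $b\in B$ with $bK=0$ and lift it to $a\in A$ with $q(a)=b$. Then $q(aJ)=b\,q(J)=bK=0$, so $aJ\subseteq\ort{J}$; on the other hand $aJ\subseteq J$ since $J$ is an ideal, so $aJ\subseteq J\cap\ort{J}=0$ by Step 1. Thus $a\in\ort{J}$, i.e., $b=q(a)=0$.

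None of the three steps involves a deep obstacle; the whole proof rests on routine C$^*$-algebraic facts (annihilator duality between $J$ and $\ort{J}$, representation of closed ideals as intersections of primitive ideals, stability of subhomogeneity under subproducts). The only subtlety worth flagging is in the middle step, where one must argue that $\sigma$ is \emph{forced} to have dimension $<n$ rather than merely allowed to; this is precisely why one needs to know that the whole quotient $A/J$ is $(n-1)$-subhomogeneous, not just that some irreducible representations of $A$ with $J$ in the kernel have dimension $<n$.
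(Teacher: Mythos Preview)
Your proof is correct and follows essentially the same route as the paper's. The only cosmetic difference is in the second step: the paper argues by contradiction that $q^{-1}(K)=J+\ort{J}$ (finding an irreducible $\pi$ of $A$ killing $J+\ort{J}$ but not $q^{-1}(K)$, and then descending to $B$), whereas you work directly in $B$ and show every primitive ideal above $q(J)$ contains $K$; in both cases the crux is the observation that any irreducible representation of $A$ annihilating $J$ has dimension at most $n-1$, which you justify via the $(n-1)$-subhomogeneity of $A/J$ and the paper justifies via the closedness of $\{[\sigma]\in\hat{A}:\dim\sigma\le n-1\}$.
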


\begin{proof}Since $J\cap\ort{J}=0$, $q|J$ is injective, so $q(J)$ is isomorphic to $J$,
hence $n$-homogeneous. Since $q(J)$ is an ideal in $B$, it follows that $q(J)\subseteq K$.
Thus, $J\subseteq q^{-1}(K)$ and then $J+\ort{J}\subseteq q^{-1}(K)$. If $J+\ort{J}\ne q^{-1}(K)$,
then there exists an irreducible representation $\pi$ of $A$ such that $\pi(J+\ort{J})=0$
and $\pi(q^{-1}(K))\ne0$. Since the set $S:=\{[\sigma]\in\hat{A}:\ \dim\sigma\leq n-1\}$
is closed in $\hat{A}$  \cite[4.4.10]{Pe} and $J$
is just the intersection of kernels of representations (the equivalence classes of which
are) in $S$, (the class of) every irreducible representation that annihilates $J$ must be in $S$. Thus 
$[\pi]$ is in $S$, so $\dim\pi<n$. Further, $\pi(\ort{J})=0$ implies that $\pi$ descends
to an irreducible representation $\sigma$ of $B$ (so that $\pi=\sigma q$) and
$\sigma(K)\ne0$, since $\pi(q^{-1}(K))\ne0$. But $\dim\sigma=\dim\pi<n$, which contradicts
the definition of $K$ as the intersection of kernels of all irreducible representations of $B$
of dimension less than $n$.

The ideal $q(J)$ in $B=A/\ort{J}$ is essential, since $aJ\subseteq\ort{J}$ ($a\in A$)
means that in fact $aJ\subseteq J\cap\ort{J}=0$, hence $a\in\ort{J}$.
\end{proof}

If $Z$ is the center of a unital C$^*$-algebra $A$ (or more generally, a C$^*$-subalgebra 
of the center of the multiplier algebra of a not necessarily unital $A$ such that $ZA$ is dense in $A$), $\Delta$ is
the maximal ideal space of $Z$ and for each $t\in \Delta$ we denote by $A(t)$ the quotient
algebra $A(t)=A/(At)$, then for every $x\in A$ the function $t\mapsto\|x(t)\|$ is upper
semicontinuous on $\Delta$ \cite[C.10]{Wi}, \cite{KW} (and vanishes at $\infty$). If these
functions are continuous, then the set $E=\{(t,x(t)):\ t\in\Delta,\ x\in A\}$ can be equipped
with a  topology such that $E$ becomes a C$^*$-bundle with fibers $A(t)$ in the sense of 
\cite[Appendix C]{Wi} or \cite{FD} and $A$ is (isomorphic to) the C$^*$-algebra $\Gamma_0(E)$
of all continuous sections of $E$ vanishing at $\infty$. Since we do not need this topology
here, we only recall that a section of $E$ is a map $s:\Delta\to E$ such that $s(t)\in A(t)$
for all $t\in\Delta$.

The following lemma can be deduced as a special case from a more general result in
\cite{APT}, but we shall sketch a short direct proof. For a C$^*$-bundle $E$ let $\Gamma_b(E)$ be the C$^*$-algebra of all
continuous bounded sections  of $E$ and $\Gamma_0(E)$ the ideal in $\Gamma_b(E)$
consisting of all sections vanishing at $\infty$. 

\begin{lemma}\label{le25}If the fibers of a C$^*$-bundle $E$ over a locally compact space
$\Delta$ are finite dimensional, then $M:=\Gamma_b(E)$ is just the multiplier algebra
of $J:=\Gamma_0(E)$.
\end{lemma}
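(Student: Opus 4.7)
The aim is mutual inclusion of $\Gamma_b(E)$ and $M(J)$.

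For $\Gamma_b(E) \hookrightarrow M(J)$, pointwise multiplication defines a $*$-homomorphism $\iota : \Gamma_b(E) \to M(J)$: for $b \in \Gamma_b(E)$ and $a \in J$, both $ba$ and $ab$ are continuous sections (by the bundle structure) that vanish at infinity (since $a$ does and $b$ is bounded), hence lie in $J$. Injectivity of $\iota$ amounts to essentiality of $J$ in $\Gamma_b(E)$: if $b \cdot J = 0$, then evaluating at any $t$ and using surjectivity of the evaluation $J \to A(t)$ (combine a local section of $E$ near $t$ with a compactly supported bump function), we get $b(t) A(t) = 0$, and since $A(t)$ is finite-dimensional hence unital, $b(t) = 0$.

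For the reverse inclusion, given $m \in M(J)$ I construct a section $m(\cdot) \in \Gamma_b(E)$ representing it. For each $t \in \Delta$, the evaluation kernel $J_t := \{a \in J : a(t) = 0\}$ is a closed two-sided ideal of $J$, and $J/J_t \cong A(t)$ is unital. Multipliers always preserve ideals (via approximate units), so $m$ descends to a multiplier of $A(t)$, which, since $M(A(t)) = A(t)$, is given by a unique element $m(t) \in A(t)$ with $\|m(t)\| \leq \|m\|$.

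The key step, and the main obstacle, is continuity of $t \mapsto m(t)$. Fix $t_0 \in \Delta$ and pick $a \in J$ with $a(t_0) = 1_{A(t_0)}$ (by the construction above). Then the positive section $d := a a^* \in J$ satisfies $d(t_0) = 1_{A(t_0)}$, so $\sigma_{A(t_0)}(d(t_0)) = \{1\}$. By upper semicontinuity of the fiberwise spectrum in a C$^*$-bundle, there is a neighborhood $U$ of $t_0$ on which $\sigma_{A(t)}(d(t))$ lies in a small interval around $1$; choose $\phi \in C_0(\mathbb{R})_+$ with $\phi(s) = 1/s$ on this interval. Then $c := a^* \phi(d) \in J$ (defined via continuous functional calculus inside $J$) satisfies $a(t) c(t) = d(t) \phi(d(t)) = 1_{A(t)}$ for $t \in U$, whence
\[
m(t) = m(t) \, a(t) \, c(t) = (ma)(t)\, c(t) = (mac)(t) \qquad (t \in U).
\]
Since $mac \in J$ is a continuous section, so is $m(\cdot)$ at $t_0$; as $t_0$ was arbitrary, $m(\cdot) \in \Gamma_b(E)$. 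Finally, $\iota(m(\cdot)) = m$ in $M(J)$, since for each $a \in J$ both sides agree pointwise with $ma$ on $\Delta$.
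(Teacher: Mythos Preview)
Your proof is correct, but the strategy differs from the paper's. The paper verifies the \emph{universal property} of the multiplier algebra: it shows that any C$^*$-algebra $A$ containing $J$ as an essential ideal maps into $\Gamma_b(E)$, by defining for $a\in A$ a section $L(a)$ via $L(a)(t)\,s(t)=(as)(t)$ and then proving continuity of $L(a)$ with a Dini argument along an approximate unit $(e_k)$ of $J$ (so $L(a)$ is the uniform-on-compacta limit of $ae_k\in J$). Finite-dimensionality enters only at the end, to pass from ``$(L(a)(1-e_k))(t)\,s(t)\to 0$ for each $s$'' to ``$\|(L(a)(1-e_k))(t)\|\to 0$''.

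You instead build the two maps directly. Your continuity argument is the genuinely different step: rather than approximating by $me_k$, you exploit unitality of the fibers to manufacture a local \emph{one-sided inverse} $c$ to a section $a$ with $a(t_0)=1_{A(t_0)}$, so that $m(\cdot)$ coincides with the continuous section $mac\in J$ near $t_0$. This avoids Dini entirely and is pleasantly constructive; the paper's route, on the other hand, yields the universal property in one stroke and would adapt more readily if one only knew the fibers were, say, unital rather than finite-dimensional. One small point worth making explicit: choose your $\phi$ with $\phi(0)=0$ (not merely $\phi\in C_0(\mathbb{R})_+$), so that $\phi(d)$ indeed lies in the non-unital algebra $J$.
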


\begin{proof}For each point $e\in E$ there is a section in $J$ passing
through $e$ and it follows that $J$ is an essential ideal in $M$. It suffices to prove
that for each C$^*$-algebra $A$, which contains $J$ as an essential ideal, the inclusion
$J\to A$ can be extended to a $*$-homomorphism $L:A\to M$. For each $t\in\Delta$ and
$a\in A$ define a map $L_{t,a}$ on the fibre $E_t$ of $E$ by
$$L_{t,a}(s(t))=(as)(t)\ \ (s\in J).$$
Here we have used the fact that each element of $E_t$ is of the form $s(t)$ for some
$s\in J$, but since $s$ is not unique, we need to check that $s(t)=0$ implies $(as)(t)=0$.
This follows from
$$(as)(t)^*(as)(t)=((as)^*(as))(t)\leq\|a\|^2(s^*s)(t)=\|a\|^2s(t)^*s(t),$$
which shows also that $\|L_{t,a}\|\leq\|a\|$. Clearly $L_{t,a}$ is linear and, to check
that $L_{t,a}$ is a left multiplication by an element of $E_t$, it suffices to verify
that $L_{t,a}$ commutes with all right multiplications $R_{z(t)}$ ($z\in J$). For each
$s\in J$ we indeed have
$$L_{t,a}(s(t)z(t))=L_{t,a}((sz)(t))=(asz)(t)=(as)(t)z(t)=L_{t,a}(s(t))z(t).$$

Thus, the function $L(a)$ which sends $t\in\Delta$ to $L_{t,a}$ is a bounded section of 
$E$. To show that it is continuous, choose an approximate unit $(e_k)$ in $J$ and observe
that $L(a)$ is the uniform limit on compact subsets of $\Delta$ of continuous sections
$L(a)e_k=ae_k\in J$. Indeed, for each $t\in\Delta$ and $s\in J$ we have
$$\|(L(a)(t)-(L(a)e_k)(t))s(t)\|=\|(a(1-e_k)s)(t)\|\stackrel{k}{\longrightarrow}0,$$
which implies, since $E_t$ is finite dimensional (with all elements of the form $s(t)$),
that $\|(L(a)(1-e_k))(t)\|\stackrel{k}{\longrightarrow}0$. To show that the convergence is
uniform on compact sets, note that
$$\|(L(a)(1-e_k))(t)\|^2=\|(L(a)(1-e_k)^2L(a)^*)(t)\|\leq\|(L(a)(1-e_k)L(a)^*)(t)\|$$
and that the net of functions $t\mapsto\|(L(a)(1-e_k)L(a)^*)(t)\|$ is decreasing
(since the approximate unit $(e_k)$ is increasing), so Dini's theorem applies.
This shows that $L(a)\in M$ and it can be verified that the map $a\to L(a)$ is a
contractive homomorphism from $A$ to $M$.
\end{proof}

If $J$ is a $n$-homogeneous C$^*$-algebra, then $J$ is (isomorphic to) $\Gamma_0(E)$
for some locally trivial C$^*$-bundle $E$ over $U:=\hat{J}$ by \cite{Fe}, \cite{TT}.
The multiplier algebra $M(J)=\Gamma_b(E)$ is $n$-subhomogeneous by \cite[IV.1.4.6]{Bla},
but in general not $n$-homogeneous as we shall now explain. 

If $E$ is of finite type (that is, if $U$ admits a finite covering by open subsets $U_i$
with $E|U_i$ trivial), then $E$ can be extended to a locally trivial C$^*$-bundle
$F$ over the Stone - Chech compactification $\beta(U)$ \cite[2.9]{Ph} and it follows (since
such a bundle is a direct summand of a trivial  bundle and bounded continuous functions
on $U$ have unique continuous extensions to $\beta(U)$) that
$M(J)=\Gamma_b(E)$ is isomorphic to the C$^*$-algebra $\Gamma(F)$
of all continuous sections of $F$, hence $M(J)$ is $n$-homogeneous in this case.

Conversely, if $M:=M(J)$ is $n$-homogeneous, then by \cite{Fe} $M=\Gamma(F)$ for a locally trivial 
C$^*$-bundle $F$ over the  compact Hausdorff space $\hat{M}\sim\hat{Z}_M$,
where $Z_M$ is the center of $M$, and (by the Dauns - Hofmann theorem) $\hat{Z}_M$
can be identified with $\beta(\hat{J})\cong\beta(\hat{Z}_J)$. Since $J$ is an ideal in $M=\Gamma(F)$, it
follows that $J$ is of the form $J=\{s\in\Gamma(F):\ s|\Lambda=0\}$ for a closed set 
$\Lambda\subseteq\beta(\hat{Z}_J)$ and, considering the characters of the center, $\Lambda$ must
be $\beta(\hat{Z}_J)\setminus\hat{Z}_J$. We conclude that $J=\Gamma_0(F|\hat{Z}_J)$, and
the C$^*$-bundle $F|\hat{Z}_J$ has an extension to a locally trivial C$^*$-bundle $F$ over a
compact space, hence is of finite type by \cite[2.9]{Ph}. Thus we can state the following remark.

\begin{remark}\label{re26} The multiplier algebra of a $n$-homogeneous C$^*$-algebra $J$ ($n\in\bn$)
is $n$-homogeneous if and only if $J$ is of finite type.
\end{remark}

We shall need the fact that for a non-unital $n$-homogeneous C$^*$-algebra $J$ the $n$-homogeneous ideal of 
$M(J)$ is strictly larger than $J$.

\begin{lemma}\label{le27} Let $E$ be a locally trivial C$^*$-bundle with fibers $\matn{\bc}$
($n\in\bn$) over a non-compact, locally compact
space $U$, $J:=\Gamma_0(E)$, $M$ the multiplier C$^*$-algebra of $J$
and $K$ the $n$-homogeneous ideal of $M$. Regard each point $t\in\beta(U)$ (the Stone - Chech
compactification of $U$) as a maximal ideal of the center $Z_M$ of $M$. 
Then $M$ is the C$^*$-algebra of continuous sections of a (not necessarily locally trivial)
C$^*$-bundle $E_0$, with fibers $M(t):=M/(Mt)$, over $\beta(U)$, extending $E$, such that $F:=E_0|\hat{K}$ is locally
trivial. Moreover, at least if $U$ is metrizable, $\hat{K}$ properly contains $U$ (that is, $K$ properly contains $J$).
\end{lemma}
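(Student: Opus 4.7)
The proof has three stages: construct the bundle $E_0$ over $\beta(U)$, identify $F=E_0|\hat K$ as the locally trivial bundle associated with the $n$-homogeneous algebra $K$, and (the main point) construct an $n$-dimensional irreducible quotient of $M$ whose central character lies in $\beta(U)\setminus U$.

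\emph{Construction of $E_0$.} By Lemma~\ref{le25} we have $M=\Gamma_b(E)$; the Dauns--Hofmann theorem identifies $Z_M$ with $C_b(U)\cong C(\beta(U))$, so $\beta(U)$ is its maximal ideal space. For each $x\in M$, the continuous bounded function $u\mapsto\|x(u)\|$ on $U$ (continuity from local triviality of $E$) extends uniquely to a continuous function $\tilde g_x$ on $\beta(U)$. The key identity is $Mt=\{x\in M:\tilde g_x(t)=0\}$ for every $t\in\beta(U)$. The inclusion $\subseteq$ is immediate from $|z(u)|\,\|m(u)\|\to 0$ as $u\to t$ whenever $z(t)=0$. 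For the reverse, given $x$ with $\tilde g_x(t)=0$ and $\varepsilon>0$, pick $z\in C(\beta(U))$ with $z(t)=1$, $0\le z\le 1$, and $\tilde g_x<\varepsilon$ on the support of $z$; then $(1-z)x\in Mt$ and $\|zx\|<\varepsilon$. Consequently the quotient norm on $M(t):=M/Mt$ equals $\tilde g_x(t)$, so $t\mapsto\|x(t)\|$ is continuous on $\beta(U)$, producing the C$^*$-bundle $E_0$ as described in the preamble to Lemma~\ref{le25}. That $E_0|U=E$ is immediate.

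\emph{Local triviality of $F$.} Since $K$ is $n$-homogeneous, Fell--Tomiyama--Takesaki \cite{Fe},\cite{TT} gives $K=\Gamma_0(F')$ for a locally trivial C$^*$-bundle $F'$ over $\hat K$. The Dauns--Hofmann map $\hat K\to\hat Z_M=\beta(U)$ is injective (each central character determines a unique primitive ideal of $K$, by $n$-homogeneity) and identifies $\hat K$ with a subset of $\beta(U)$. At each $t\in\hat K$, the fibre of $F'$ and the fibre of $E_0|\hat K$ both coincide with $\matn{\bc}$ (the unique $n$-dimensional irreducible quotient of $M$ with central character $t$), so $F=E_0|\hat K\cong F'$ is locally trivial.

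\emph{The strict inclusion $\hat K\supsetneq U$.} Since $U$ is non-compact and metrizable, pick a sequence $(u_i)\subseteq U$ with no cluster point in $U$, and an ultrafilter $\omega$ on $\bn$ refining the cofinite filter; then $t:=\lim_\omega u_i\in\beta(U)\setminus U$. Fix arbitrary local trivializations $\phi_i:E_{u_i}\to\matn{\bc}$ and define
\[
\pi:M\to\matn{\bc},\qquad \pi(x)=\lim_\omega\phi_i(x(u_i)).
\]
The limit exists because bounded sets in $\matn{\bc}$ are relatively compact, and $\pi$ is a $*$-homomorphism since each $\phi_i$ is a $*$-isomorphism and algebraic operations commute with ultralimits. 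As $\pi(1)=I$, $\pi$ is nonzero, hence surjective onto the simple algebra $\matn{\bc}$. For $z\in Z_M$ with $z(t)=0$, $\pi(z)=(\lim_\omega z(u_i))I=0$, so $\ker\pi\supseteq Mt$ and $\pi$ descends to an $n$-dimensional irreducible representation of $M(t)$. Its central character is $t\notin U$, so the corresponding primitive ideal lies in $\hat K\setminus U$, proving $\hat K\supsetneq U$, i.e.\ $K\supsetneq J$. The main obstacle is the well-definedness of $\pi$: the trivializations $\phi_i$ are entirely unrelated, but once fixed, the fibrewise $*$-algebra structure is preserved by passage to the ultralimit, and the precise choice of $\phi_i$ is immaterial for our purposes---only the existence of some such $\pi$ with central character $t$ matters.
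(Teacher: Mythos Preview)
There is a genuine error in your third stage. The inference ``$\pi(1)=I$, so $\pi$ is nonzero, hence surjective onto the simple algebra $\matn{\bc}$'' is false: a unital $*$-homomorphism into a simple C$^*$-algebra need not be surjective---witness $\bc\to\matn{\bc}$, $\lambda\mapsto\lambda I$. Surjectivity of $\pi$ is exactly what you need to conclude that $\pi$ is an $n$-dimensional \emph{irreducible} representation, so this is not a minor slip. The repair is to exploit that $\{u_i\}$ is closed and discrete in the metrizable space $U$: one can then construct bounded sections $s_{ij}\in\Gamma_b(E)$ with $\phi_k(s_{ij}(u_k))=e_{ij}$ for every $k$ (patching local sections via a partition of unity over disjoint neighbourhoods of the $u_k$), whence $\pi(s_{ij})=e_{ij}$ and $\pi$ is onto. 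This is precisely what the paper does---its sections $s_{ij}$ with matrix-unit values at each $t_k$ play exactly this role---so once the gap is filled your ultrafilter formulation is a legitimate repackaging of the same idea. You should also justify why an $n$-dimensional irreducible representation of $M$ automatically restricts nontrivially to $K$ (equivalently, why $[\pi]\in\hat K$): if $\pi(K)=0$ then $\pi$ factors through $M/K$, which is $(n{-}1)$-subhomogeneous, contradicting $\dim\pi=n$.

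Your second stage is also too brisk. Knowing that two bundles have isomorphic fibres pointwise does not give a bundle isomorphism; you must identify $\hat K$ with an \emph{open} subset of $\beta(U)$ (via $Z_K$ sitting as an ideal in $Z_M$, using that $K\supseteq J$ is essential) and then show that $K=\Gamma_0(E_0|\hat K)$ as an ideal of $M=\Gamma(E_0)$. The paper does this by arguing that for $t\in\hat K$ the ideal $Mt$ is already primitive with quotient $\matn{\bc}$, so that sections vanishing on $\beta(U)\setminus\hat K$ are exactly the elements of $K$. By contrast, your first stage---identifying $Mt$ directly with $\{x:\tilde g_x(t)=0\}$ and reading off continuity of the fibre norm from the continuous extension $\tilde g_x$---is correct and in fact cleaner than the paper's argument by contradiction via functional calculus.
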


\begin{proof} For each $x\in M$ denote by $x(t)$ the coset of $x$ in $M(t)$. 
The function $\check{x}(t):=\|x(t)\|$ is upper
semicontinuous on $\check{Z}_M=\beta(U)$  \cite[C10]{Wi}. Moreover, $\check{x}$ must be lower
semicontinuous on $U$ as the supremum $\sup\{(xy)^{\check{ }}:\ y\in J,\ \|y\|\leq1\}$
of continuous functions (note that $xy\in J=\Gamma_0(E)$ if $y\in J$). To show that
$\check{x}$ is continuous on all $\beta(U)$,  we may assume that $x\geq0$ (otherwise
just replace $x$ by $|x|$). It suffices now to prove that $\check{x}$ coincides
with the unique continuous extension $\tilde{x}$ of the bounded continuous function
$\check{x}|U$. In other words, we have to show  for each $t^{\prime}\in\beta(U)\setminus U$
and each net $(t_{\nu})\subseteq U$ converging to $t^{\prime}$
the equality
$$\check{x}(t^{\prime})=\lim\check{x}(t_{\nu}).$$
The inequality $\tilde{x}(t^{\prime})\leq\check{x}(t^{\prime})$ follows from the continuity
of $\tilde{x}$ and the upper semicontinuity of $\check{x}$ since the two functions coincide
on the dense set $U$. Suppose that $\tilde{x}(t^{\prime})<\check{x}(t^{\prime})$. Then, by
continuity of $\tilde{x}$,
for a  small $\varepsilon>0$ we have the inequality $\tilde{x}(t)\leq\check{x}
(t^{\prime})-\varepsilon$ for all $t$ in an open neighborhood $V$ of $t^{\prime}$ in 
$\beta(U)$. Choose a continuous
function $f:[0,\infty)\to[0,1]$ such that $f([0,\check{x}(t^{\prime})-\varepsilon])=0$
and $f(\check{x}(t^{\prime}))=1$. Note that for $t\in U\cap V$ the spectrum of $x(t)$ is 
contained in $[0,\check{x}(t^{\prime})-\varepsilon]$, hence $f(x)(t)=f(x(t))=0$ and
therefore by continuity $\widetilde{f(x)}(t)=0$ for all $t\in V$. Further, $(f(x))^{\check{}}
(t^{\prime})=\|f(x)(t^{\prime})\|=\|f(x(t^{\prime}))\|=1$, since $\check{x}(t^{\prime})$
is in the spectrum of $x(t)$ and $f(\check{x}(t^{\prime}))=1$. Thus, replacing $x$ by $f(x)$, we achieve that $\tilde{x}(t)
=0$ if $t\in V$ and $\check{x}(t^{\prime})=1$. Choosing a continuous function $\chi$ on $\beta(U)$
with values in $[0,1]$, supported in $V$ and with $\chi(t^{\prime})=1$, and replacing $x$
by $\chi x$ (where $\chi$ is regarded as an element of $Z_M$ by the Dauns - Hofmann theorem),
we find an element $x\in M$ such that $\tilde{x}(t)=0$ for all $t\in\beta(U)$ (hence $x=0$)
and $\check{x}(t^{\prime})=1$, which is a contradiction. The just proved continuity of $\check{x}$ means
that $M$ is the C$^*$-algebra of continuous sections a C$^*$-bundle $E_0$ over $\beta(U)$ with 
fibers $M(t)$ \cite[Appendix C]{Wi}. 

In general the map $\zeta:\hat{M}\to\check{Z}_M=\beta(U)$, $\zeta([\pi])=\ker(\pi|Z_M)$,
is continuous, but since the functions $\check{x}$
($x\in M$) are continuous, this map is also open \cite[C.10]{Wi}. Since $J$ and $K$ ($J\subseteq K$)
are essential ideals in $M$, one can verify the inclusion of the centers $Z_J\subseteq Z_K
\subseteq Z_M$ as  ideals in $Z_M$. Further, $\zeta(\hat{K})=\check{Z}_K$.
(More precisely, denoting for each $[\pi]\in\hat{K}$ by $\tilde{\pi}$ the unique extension of
$\pi$ to the irreducible representation of $M$, $\tilde{\pi}|Z_K=\pi|Z_K$.) Since $K$ is
$n$-homogeneous, we may identify $\hat{K}$ with $\check{Z}_K$, that is, $\zeta$ maps $\hat{K}$
onto $\check{Z}_K\subseteq\beta(U)$ homeomorphically, and we may regard $\hat{K}$ as an 
open subset in $\beta(U)$. Since $K$ is $n$-homogeneous, for
each $t\in\check{Z}_{K}$ there is (up to a unitary equivalence) a unique irreducible representation 
$\pi_t$ of $K$ such that $\ker(\pi|Z_K)=t\cap Z_K$. Then the extension $\tilde{\pi}_t$ of
$\pi_t$ to $M$ is the unique irreducible representation $\sigma$ of $M$ with $\ker(\sigma|Z_M)
=t$. (Namely, $\ker(\sigma|Z_M)=t$ implies that $\ker(\sigma|Z_K)=t\cap Z_K$, hence 
$\sigma|K$ must coincide, up to a unitary equivalence, with $\pi_t$, since irreducible
representations of a homogeneous C$^*$-algebra $K$ are determined by their restrictions to the
center. This implies $\sigma=\tilde{\pi}_t$, since extension of nondegenerate representations from ideals
are unique.) Since each $Mt$ is an intersection of primitive ideals, it follows that 
$Mt$ must be a primitive ideal in $M$ (for  by the above there is only one primitive ideal 
containing $t$) and $M/(Mt)\cong\matn{\bc}$ for all $t\in\check{Z}_{K}$. Further, 
if $t\in\beta(U)\setminus \check{Z}_K$, then $Mt$ must be the intersection of kernels of
certain irreducible representations $\pi$ with $[\pi]\in\hat{M}\setminus\hat{K}$ only. 
It follows that for a section $x\in M$ we have that $x(t)=0$ for all 
$t\in\beta(U)\setminus\check{Z}_{K}$ if
and only if $\pi(x)=0$ for all  $[\pi]\in\hat{M}\setminus\hat{K}$. This means that the ideal $\Gamma_0(E_0|\hat{K})$ in 
$\Gamma(E_0)=M$ must be $K$. Since $K$ is $n$-homogeneous, it follows (using
\cite{Fe}) that $F:=E_0|\hat{K}$ must be locally trivial.
Finally, since $K$ contains $J$ as an ideal, 
$J=\{s\in K:\ s|(\hat{K}\setminus U)=0\}=\Gamma_0(F|U)$ \cite[II.14.8]{FD}, hence $F|U\cong E$.

To show that $\hat{K}$ properly contains $U$, choose a
sequence $(t_k)$ in $U$ with no limit points in $U$ (recall that $U$ is assumed metrizable)
and sections 
$s_{ij}\in M=\Gamma_b(E)$
such that $s_{ij}(t_k)$ ($i,j=1,\ldots,n$) are the matrix units in the fibers 
$E_{t_k}\cong
\matn{\bc}$. For each section $s\in M$ we  expand
$s(t_k)=\sum_{i,j=1}^n\alpha_{ij}(t_k)s_{ij}(t_k)\ \ (\alpha_{ij}(t_k)\in\bc),$
extend each (bounded) sequence $(\alpha_{ij}(t_k))_k$ to a continuous function $\alpha_{ij}$ on
$\beta(U)$, choose a  limit point
$t_0\in\beta(U)\setminus U$ of $(t_k)$  and set
$$\pi_{t_0}(s):=\sum_{i,j=1}^n\alpha_{i,j}(t_0)e_{ij}=[\alpha_{ij}(t_0)]\in\matn{\bc},$$
where $e_{ij}$ are the standard matrix units in $\matn{\bc}$. This defines a representation
$\pi_{t_0}$ of $M$ into $\matn{\bc}$ ($\pi_{t_0}(s)$ is a kind of a limit point of $(s(t_k))$), which is surjective (hence irreducible) since
$\pi_{t_0}(s_{ij})=e_{ij}$.
If $[\pi_{t_0}]$ were not in $\hat{K}$, then $\pi_{t_0}(K)=0$, which would imply (by the
definition of $K$) that 
$\ker\pi_{t_0}$ is in the closure of the set of kernels of all irreducible representations
of $M$ of dimension less than $n$, which is impossible since this set is closed.
\end{proof}

\section{A reduction to locally homogeneous C$^*$-algebras}

\begin{lemma}\label{le28} If a  separable $n$-subhomogeneous C$^*$-algebra $A$ is not a direct sum of 
homogeneous C$^*$-algebras, then $\icb{A}\not\subseteq\ue{A}$.
\end{lemma}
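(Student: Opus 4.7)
The plan is to induct on the subhomogeneity degree $n$; the case $n=1$ is vacuous since every commutative C$^*$-algebra is $1$-homogeneous. For the inductive step, let $J$ denote the $n$-homogeneous ideal of $A$. I first reduce to the case where $J$ is essential in $A$. If $A = J \oplus J^{\perp}$, then $J^{\perp}$ is $(n-1)$-subhomogeneous (irreducible representations of $J^{\perp}$ extend uniquely to irreducible representations of $A$ annihilating $J$, hence have dimension at most $n-1$) and is not itself a direct sum of homogeneous C$^*$-algebras (else $A$ would be). The inductive hypothesis then provides $\phi' \in \icb{J^{\perp}} \setminus \ue{J^{\perp}}$; extending $\phi'$ by zero on $J$ yields $\phi \in \icb{A}$, and any uniform approximation of $\phi$ by elementary operators on $A$ would restrict to one of $\phi'$ on $J^{\perp}$. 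Otherwise $A \neq J + J^{\perp}$, and I pass to $B := A/J^{\perp}$: by Lemma \ref{le24}, $q(J)$ is an essential $n$-homogeneous ideal of $B$, and the hypothesis forces $q(J) \neq B$, so $B$ falls under the essential case. Arranging the bad map $\psi(y) = byb$ on $B$ (constructed below) so that $bBb \subseteq q(J)$, I define $\phi(x) := (q|_J)^{-1}(\psi(q(x))) \in J \subseteq A$; ideal preservation for $\phi$ reduces (using that $j \in J$ with $jJ \subseteq I$ forces $j \in I$, via an approximate unit of $J$) to ideal preservation for $\psi$, and since elementary operators on $A$ preserve $J^{\perp}$, any uniform approximation of $\phi$ on $A$ descends to one of $\psi$ on $B$.

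In the essential case, $J$ is essential in $A$ and $A$ is not $n$-homogeneous. Then $J$ must be non-unital, because a unit of $J$ would be central in $A$ by essentiality, forcing $A = J$. Thus Lemma \ref{le27} applies (separability of $J$ yields metrizability of $U := \hat{J}$): identifying $A \subseteq M(J) = \Gamma(E_0)$ over $\beta(U)$, the $n$-homogeneous ideal $K$ of $M(J)$ properly contains $J$, and I pick $t_0 \in \hat{K} \setminus U$ with associated irreducible representation $\pi_{t_0} : M(J) \to \matn{\bc}$. Since $t_0 \notin U$, $\pi_{t_0}(J) = 0$, so $\pi_{t_0}|_A$ factors through $A/J$, which is $(n-1)$-subhomogeneous. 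Hence $\pi_{t_0}(A) \subsetneq \matn{\bc}$ is a finite-dimensional $*$-subalgebra whose irreducible blocks all have dimension at most $n-1$, and $\bc^n$ decomposes into at least two non-trivial $\pi_{t_0}(A)$-invariant subspaces $V$ and $W$.

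I pick unit vectors $\eta \in V$ and $\xi \in W$ and set $\beta := \xi\eta^* \in \matn{\bc}$, so that $\beta \cdot \pi_{t_0}(A) \cdot \beta = 0$ (mirroring $\beta = e_{12}$ in the example $A_0$). Lifting $\beta$ via the bundle structure of Lemma \ref{le27} to $b_0 \in M(J)$, I set $b := \chi b_0$ for a central cutoff $\chi$ supported in a small neighborhood of $t_0$ with $\chi(t_0) = 1$. With $\chi$ chosen carefully (shrinking its support so that $\pi_t(bAb)$ remains close to $\beta\pi_{t_0}(A)\beta = 0$ across it, exploiting the continuity of $\check{x}$ from the proof of Lemma \ref{le27}), one arranges $bAb \subseteq J \subseteq A$, making $\phi(x) := bxb$ a completely bounded map into $A$. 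Ideals of $A$ containing $J$ absorb $\phi(A) \subseteq J$, while ideals of $A$ contained in $J$ are ideals of $J$ (hence stable under conjugation by $b \in M(J)$), so $\phi \in \icb{A}$.

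The main obstacle is showing $\phi \notin \ue{A}$. The argument should parallel Lemma \ref{le220} and the sketch for $A_0$: supposing $\psi \in \e{A}$ of length $m$ satisfies $\|\phi - \psi\| < \varepsilon$, one passes to the fiber picture at points near $t_0$ to obtain elementary operators of length at most $m$ on $\matn{\bc}$, whose coefficients are forced to lie in $\pi_{t_0}(A)$, that approximate the two-sided multiplication $x \mapsto \beta x \beta$. Since $\beta \notin \pi_{t_0}(A)$ and $\pi_{t_0}(A)$ is a proper subalgebra of $\matn{\bc}$, a rank/dimension argument in the spirit of Lemma \ref{le220}'s Auerbach-basis technique yields a uniform positive lower bound on $\|\phi - \psi\|$. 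Making this quantitative estimate rigorous---and carefully propagating the obstruction at $t_0$ back through the cutoff $\chi$ to the global behavior of $\phi$ on $A$, so that the obstruction seen in $M(J)$ cannot be washed out by the passage to elementary operators with coefficients constrained to $M(A)$---is the technical heart of the argument.
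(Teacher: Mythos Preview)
Your overall architecture matches the paper's: induct on $n$, reduce to the case where the $n$-homogeneous ideal $J$ is essential via $B = A/J^{\perp}$, embed $A \subseteq M(J)$, and build $\phi(x) = bxb$ with $b \in M(J)$ chosen so that $bAb \subseteq J$. (Your reduction is in one respect cleaner than the paper's: you explicitly handle $A = J \oplus J^{\perp}$ with $J$ non-unital by inducting on $J^{\perp}$, a case the paper's reduction glosses over.) But the essential-case construction has a genuine gap.

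The problem is the claim that shrinking the support of $\chi$ forces $bAb \subseteq J$. For $bab \in J$ you need $b(t)\,a(t)\,b(t) = 0$ \emph{exactly} for every $t \in \beta(U)\setminus U$, not merely approximately; continuity of $\check{x}$ gives you no more than smallness. The block decomposition $\bc^n = V \oplus W$ you fix at $t_0$ need not persist at nearby $t$: since $\pi_t|_A$ factors through the $(n-1)$-subhomogeneous quotient $A/J$, the irreducible block sizes of $\pi_t(A)$ can jump upward as $t$ varies, so your rank-one $\beta$ may fail to satisfy $\beta\,\pi_t(A)\,\beta = 0$ on any neighborhood of $t_0$. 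The paper's fix is to let $m$ be the \emph{maximum} over $t \in \overline{V}\setminus U$ of the largest irreducible block of $\pi_t|_A$, realized at some $t_1$; lifting matrix units of that $M_m$-block to $A$ and using continuity shows $\pi_t(A)$ contains a copy of $M_m$ for all $t$ in a neighborhood $W$ of $t_1$, and maximality of $m$ then forces $\pi_t|_A$ to stay block-diagonal (after a continuous unitary change of frame) on all of $W\setminus U$. One then defines $b$ so that $b|_{\beta(U)\setminus U}$ is a section supported in $W\setminus U$ with strictly off-diagonal value, giving $b(t)a(t)b(t)=0$ identically there.

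There is a second gap in the $\phi \notin \ue{A}$ step. The coefficients of $\psi \in \e{A}$ lie in $M(A)$, not in $A$, so you need $\pi_{t_1}(M(A))$ (not just $\pi_{t_1}(A)$) to be block-diagonal; the paper verifies this separately, using that the projection onto the $\sigma_{t_1}$-isotypic component already lies in $\pi_{t_1}(A)$. And the obstruction itself is not a rank/dimension count in the style of Lemma~\ref{le220}: feeding in $x$ supported in the $(2,1)$-block, one computes that $\phi(x)(t)$ sits in the $(1,2)$-corner while $\psi(x)(t)$ stays (approximately) in the $(2,1)$-corner, and evaluating at a single $t \in W\cap U$ (where $\pi_t(A) = \matn{\bc}$, so such $x$ exist) gives $\|\phi-\psi\| \geq 1/3$ directly. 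No Auerbach-basis argument is needed.
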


Since the proof of the Lemma occupies the entire section, it will be divided into
several steps. Let $J$ be the $n$-homogeneous ideal of $A$, $U$ the primitive spectrum of
the center $Z_J$ of $J$, $E$ the locally trivial C$^*$-bundle over $U$ such that 
$J=\Gamma_0(E)$, and $M=M(J)=\Gamma_b(E)$ the multiplier C$^*$-algebra of $J$. 
If $J$ is
unital, then $A$ is isomorphic to $J\oplus(A/J)$, where $A/J$ is $m$-subhomogeneous for 
some $m<n$, and the proof reduces to a smaller degree of subhomogeneity. So by an induction we may
assume that $J$ is not unital, hence $U$ is not compact. We shall show that in this case
$\icb{A}\not\subseteq\ue{A}$. By Lemma
\ref{le27} the $n$-homogeneous ideal $K$ of $M$ properly contains $J$ and the corresponding
locally trivial C$^*$-bundle $F$ over the open subset $\hat{K}$ of $\beta(U)$ 
(so that $K=\Gamma_0(F)$)
extends $E$, while $M=\Gamma(E_0)$ for a (not necessarily locally trivial) C$^*$-bundle 
$E_0$ over $\beta(U)$ extending $F$. We  denote by $Z_K$ and $Z_M$ the centers of $K$ and $M$, identify $\hat{K}$ 
and $\hat{J}$ with $\check{Z}_K$ and $\check{Z}_J$
(respectively) and regard them as open subsets of $\check{Z}_M=\beta(U)$. Choose  $t_0\in\hat{K}\setminus\hat{J}$ and an open neighborhood $V$ of
$t_0$ in $\beta(U)$ such that $\overline{V}\subseteq\hat{K}$ and $F|\overline{V}$ is trivial.
Using a fixed isomorphism $E_0|\overline{V}=F|\overline{V}\cong\overline{V}\times\matn{\bc}$, we shall identify
the two bundles over $V$.

\medskip
{\bf Suppose first that the ideal $J$ in $A$ is essential.} Then we may regard $A$ as a 
C$^*$-subalgebra
of $M$. Since all $n$-dimensional irreducible representations of $A$ are (up to a unitary
equivalence) evaluations at points of $U$, for each $t\in\overline{V}\setminus U$ the 
evaluation $\pi_t$  of sections of $E_0$ at $t$ must be reducible as a representation of $A$. 
Let $m$ be the maximal dimension of
irreducible subrepresentations of $\pi_t|A$ as $t$ ranges over $\overline{V}\setminus U$
and let $t_1\in\overline{V}\setminus U$ be a point where this maximum is attained.
Then (up to a unitary equivalence) $\pi_{t_1}|A$ has the form
\begin{equation}\label{30}\pi_{t_1}(a)=\left[\begin{array}{cc}
\sigma_{t_1}^{(k)}(a)&0\\
0&\rho_{t_1}(a)\end{array}\right]\ \ (a\in A),\end{equation}
where $\sigma_{t_1}:A\to {\rm M}_m(\bc)$ is an irreducible representation, $k\in\bn$ and 
$\rho_{t_1}:A\to{\rm M}_{n-km}$ is a representation disjoint from $\sigma_{t_1}$. Denote
by $e_{ij}$ ($i,j=1,\ldots,m$) the standard matrix units in  ${\rm M}_m(\bc)$. By 
\cite[4.2.5]{Dix} there exist $a_{ij}\in A$ such that $\pi_{t_1}(a_{ij})=e_{ij}^{(k)}
\oplus0$ (relative to the decomposition (\ref{30})). By continuity, if $t$ is close to $t_1$,
$\pi_t(a_{ij})$ will be approximately matrix units in ${\rm M}_m(\bc)$ and well known arguments
(using functional calculus and polar decomposition, similarly as in \cite[Section 12.1]{KR}) 
show that there exist $b_{ij}\in A$
such that $\pi_t(b_{ij})$ ($i,j=1,\ldots,m$) are $m\times m$ matrix units in $\matn{\bc}$;
in other words, $\pi_t(A)$ contains a copy of ${\rm M}_m(\bc)$ for all $t$ in a neighborhood
$W\subseteq\overline{W}\subseteq V$ of $t_1$. It follows now by maximality of $m$ that
(up to a conjugation with a unitary $u\in C(W,\matn{\bc})$) $\pi_t|A$ has the form
\begin{equation}\label{31}a(t):=\pi_t(a)=\left[\begin{array}{cc}
\sigma_t(a)&0\\
0&\theta_t(a)\end{array}\right]\ \ (a\in A,\ t\in W\setminus U),
\end{equation}
where $\sigma_t:A\to {\rm M}_m(\bc)$ is an irreducible and $\theta:A\to {\rm M}_{n-m}(\bc)$
a (possibly degenerate) representation.

Choose a continuous function $\chi$ on $\beta(U)\setminus U$, supported in $W\setminus U$,
with values in $[0,1]$ and $\chi(t_1)=1$. Let $v\in{\rm M}_{m,n-m}(\bc)$ be any matrix
with $\|v\|=1$. Since $M=\Gamma(E_0)$ and 
$J=\Gamma_0(E)=\{s\in\Gamma(E_0):\ s|(\beta(U)\setminus U)=0\}$,  
$M/J=\Gamma(E_0|(\beta(U)\setminus U))$ (using the Tietze extension
theorem for sections of Banach bundles \cite[II.14.8]{FD}). Define a section $s\in M/J$ 
on $\beta(U)\setminus U$ by
\begin{equation}\label{32}s(t)=\left[\begin{array}{cc}
0&\chi(t)v\\
0&0\end{array}\right]\  \mbox{if}\ t\in W\setminus U\ \mbox{and}\ s(t)=0\ \mbox{if}\ 
t\in(\beta(U)\setminus U)\setminus W
\end{equation}
and let $b\in M$ be any lift of $s$ (that is, a continuous extension of $s$ to a section
of $E_0$). Finally, let $\phi:A\to M$ be the 
twosided multiplication $x\mapsto bxb$.

\medskip
{\em Proof that $\phi(A)\subseteq A$ and that $\phi$ preserves ideals.} Given $a\in A$, the value $\phi(a)(t)$ of $\phi(a)\in M$ at each $t\in\beta(U)\setminus U$
is $0$. Indeed, $b(t)=s(t)=0$ if $t\in(\beta(U)\setminus U)\setminus W$, while for $t\in W\setminus U$ we have that
$\phi(a)(t)=b(t)a(t)b(t)=s(t)\pi_t(a)s(t)=0$, as can be verified by performing the matrix
multiplication with $\pi_t(a)$ and $s(t)$ of the form (\ref{31}) and (\ref{32}). This implies
that $\phi(a)\in J$; in particular $\phi$ maps $A$ into $A$. To show that $\phi$ preserves
all ideals in $\id{A}$, let $(e_k)$ be an approximate unit in the $n$-homogeneous ideal $J$. 
Note that (since $\phi(a)
\in J$)
$$\phi(a)=\lim e_k\phi(a)e_k=\lim(e_kb)a(be_k),$$
where the two sided multiplications $a\mapsto (e_kb)a(be_k)$ preserve the ideals since
$e_kb$ and $be_k$ are in $J\subseteq A$. Thus $\phi$ is a pointwise limit of maps preserving 
ideals,
so $\phi$ must preserve (closed) ideals.  

\medskip
{\em Proof that $\phi\notin\ue{A}$.} First  we shall `localize' the proof to $W$ (to work with
matrix valued functions instead of bundles), then
we shall show by an explicit computation that $\phi\notin\ue{A}$. 

Let $J_W=\{a\in M:\ a(t)=0\ \forall t\in
\overline{W}\}$ and let $\phi_W$ be the map on $A_W:=A/(J_W\cap A)$ induced by $\phi$. 
Note that $A_W$ is (naturally isomorphic to) a C$^*$-subalgebra of $M/J_W=
\Gamma(E_0|\overline{W})=\Gamma_0(F|\overline{W})=C(\overline{W},\matn{\bc})$, and $\phi_W$
is just the twosided multiplication 
$$\phi_W(x)=dxd\ \ (x\in A_W\subseteq C(\overline{W},\matn{\bc})),$$
where $d$ is the coset of $b$ in $M/J_W$. As an element of $C(\overline{W},\matn{\bc})$, decomposing
$\matn{\bc}$ into blocks according to  (\ref{31}), $d$ can be represented by a block matrix of
continuous functions
$$d=\left[\begin{array}{cc}
d_{11}&d_{12}\\
d_{21}&d_{22}\end{array}\right],$$
where (by the definitions of $b$ and $s$) $d_{11}(t_1)=0$, $d_{21}(t_1)=0$, $d_{22}(t_1)=0$
and $d_{12}(t_1)=v$. It follows now from $\phi_W(x)=dxd$ that
$$\phi_W(x)(t_1)=\left[\begin{array}{cc}
0&vx_{21}(t_1)v\\
0&0\end{array}\right]\ \mbox{for all}\ x=\left[\begin{array}{cc}
x_{11}&x_{12}\\
x_{21}&x_{22}\end{array}\right]\ \mbox{in}\ A_W\subseteq C(\overline{W},\matn{\bc}).$$
Given $\varepsilon>0$, by continuity of  functions $d_{ij}$  
(that is, since $\|d(t)-d(t_1)\|$ is small if $t\in W$ is close to $t_1$) there exists a neighborhood
$W_1\subseteq W$ of $t_1$ such that we have uniformly for all $x=[x_{ij}]\in A_W$ 
with $\|x\|\leq1$ the estimate
\begin{equation}\label{33}\|\phi_W(x)(t)-\left[\begin{array}{cc}
0&vx_{21}(t)v\\
0&0\end{array}\right]\|<\varepsilon\ \ \mbox{for all}\ t\in W_1.\end{equation}

The evaluation $\pi_{t_1}$  maps  $A$ into block diagonal matrices 
according to (\ref{31}), but we shall need  the same for 
$M(A)$ ($\pi_{t_1}$ can be degenerate). Since
$J$ is essential in $A$, hence also in $M(A)$, we have that $M(A)\subseteq M(J)=M$, hence
each $f\in M(A)$ can be represented over $W$ by a $2\times 2$ block matrix $f|W=[f_{ij}]$
in accordance with the decomposition (\ref{31}). Let 
$p\in\matn{\bc}$ be the
projection onto $[\sigma_{t_1}^{(k)}(A)\bc^n]$ (where $\sigma_{t_1}$ is as in (\ref{30})).
Then $p\in\pi_{t_1}(A)$ since $\rho_{t_1}$ and $\sigma_{t_1}^{(k)}$ are disjoint. 
With respect to the  decomposition  (\ref{31}), $p$ has 
the form $p=1\oplus q$, 
where $1$ is the $m\times m$ identity matrix and $q$ is a projection. Since 
$f(t_1)p\in\pi_{t_1}(A)$ and $pf(t_1)\in\pi_{t_1}(A)$ and matrices in $\pi_{t_1}(A)$
are block - diagonal, a matrix multiplication shows 
that $f_{21}(t_1)=0$ and $f_{12}(t_1)=0$. Thus $\pi_{t_1}(M(A))$ consists of block - diagonal
matrices only.

\medskip
Suppose that there exists $\psi\in\e{A}$ with $\|\psi-\phi\|<\varepsilon$,
hence 
\begin{equation}\label{34}\|\psi_W-\phi_W\|<\varepsilon,\end{equation}
where $\psi_W$ is the map induced on $A_W$ by $\psi$.
Then $\psi$ is of the form
$$\psi(x)=\sum_{k=1}^{\ell}a^kxb^k\ \ (x\in A),$$
where $a^k,b^k\in M(A)\subseteq M$.  By the previous paragraph 
$a^k(t_1)=a^k_{11}(t_1)\oplus a^k_{22}(t_1)$ and 
$b^k(t_1)=b^k_{11}(t_1)\oplus b^k_{22}(t_1)$ are block diagonal.
Now, for matrices of the form
\begin{equation}\label{35}x=\left[\begin{array}{cc}
0&0\\
x_{21}&0\end{array}\right]\end{equation}
we have that $\sum_{k=1}^{\ell}a^k(t_1)xb^k(t_1)$ is of the form
$$\left[\begin{array}{cc}
0&0\\
\sum_{k=1}^{\ell}a^k_{22}(t_1)x_{21}b^k_{11}(t_1)&0
\end{array}\right],$$
hence by continuity of the coefficients $a^k$ and $b^k$ (on $W$) there exists a neighborhood $W_2\subseteq W$ of $t_1$ such that
\begin{equation}\label{36}\|\psi_W(x)(t)-\left[\begin{array}{cc}
0&0\\
\sum_{k=1}^{\ell}a^k_{22}(t)x_{21}(t)b^k_{11}(t)&0\end{array}\right]\|<\varepsilon\ \mbox{
for all}\ t\in W_2
\end{equation} 
uniformly for all $x\in A_W$ of the form (\ref{35}) with $\|x\|\leq1$. From (\ref{33}), (\ref{34})
and (\ref{36}) we conclude that 
\begin{equation}\label{37}\|\left[\begin{array}{cc}
0&vx_{21}(t)v\\
-\sum_{k=1}^{\ell}a^k_{22}(t)x_{21}(t)b^k_{11}(t)&0\end{array}\right]\|<3\varepsilon\end{equation}
for all $t\in W_1\cap W_2$ and $x\in A_W$ of the form (\ref{35}) with $\|x\|\leq1$.
But, for each  $t\in W_1\cap W_2\cap U$, we have that $A_W(t)=\pi_t(A)=\matn{\bc}$ (since
already $J(t)=\matn{\bc}$),
hence we may choose $x\in A_W$ of the form (\ref{35}) so that $\|x_{21}(t)\|=1$ and
$\|vx_{21}(t)v\|=1$ (for a fixed $t$), which contradicts (\ref{37}) if $\varepsilon<1/3$. 
Thus  $\phi\notin\ue{A}$. This proves the lemma
in the case $J$ is essential in $A$.

\medskip
{\bf A reduction to the case when $J$ is essential.} Let $B=A/\ort{J}$, $q:A\to B$ the quotient map and $K=q(J)$. By Lemma \ref{le24} $K$ is
the $n$-homogeneous ideal of $B$ and is an essential ideal in $B$. By what we have already proved above, there 
exists $b\in M(K)$ such that the twosided multiplication $\phi(x)=bxb$ maps $B$ into $K$
and $\phi\in\icb{B}\setminus\ue{B}$. Define $\phi_0:A\to A$ as the composition
$$\phi_0=(q|J)^{-1}\phi q.$$
Then $\phi_0(A)\subseteq J$. To show that $\phi_0$ preserves ideals of $A$, let $(e_k)$ be
an approximate unit in $J$ and choose $a\in M(J)$ so that $\tilde{q}(a)=b$, where $\tilde{q}$
is the extension to $M(J)\to M(K)$ of the isomorphism $q|J:J\to K$. Then for $x\in A$
$$\begin{array}{lll}
\phi_0(x)&=&\lim e_k\phi_0(x)e_k=\lim e_k(q|J)^{-1}(bq(x)b)e_k\\
&=&\lim (q|J)^{-1}(q(e_k)bq(x)bq(e_k))
 =\lim(q|J)^{-1}q(e_kaxae_k)\\
&=&\lim(e_ka)x(ae_k),\end{array}$$
hence $\phi_0(x)$ is in (the closed twosided) ideal generated by $x$ since $e_ka\in J\subseteq A$. To show that
$\phi_0\notin\ue{A}$, assume the contrary, that for each $\varepsilon>0$ there exists
$\psi\in\e{A}$ with $\|\phi_0-\psi\|\leq\varepsilon$. Denote by $\dot{\psi}$ the elementary
operator on $B$ induced by $\psi$, so that $\dot{\psi}q=q\psi$. Then for each $x$ in the unit
ball of $A$ we have that $\|\phi_0(x)-\psi(x)\|\leq\varepsilon$, which implies that
$\|\phi q(x)-\dot{\psi}q(x)\|=\|q(\phi_0(x)-\psi(x))\|\leq\varepsilon$. Since $q$ maps
the closed unit ball of $A$ onto that of $B$, it follows that $\|\phi-\dot{\psi}\|\leq
\varepsilon$. But this would imply that $\phi\in
\ue{B}$, a contradiction. \rightline{$\Box$}

\medskip

Combining Lemmas \ref{le23} and \ref{le28} with what we have proved in the Introduction proves
Theorem \ref{th11}.

The author does not know if Theorem \ref{th11} holds also for nonseparable C$^*$-algebras.

\end{document}